\theoremstyle{plain}
\newtheorem{theorem}{Theorem}[section]
\newtheorem{lemma}[theorem]{Lemma}
\newtheorem{corollary}[theorem]{Corollary}
\newtheorem{proposition}[theorem]{Proposition}
\theoremstyle{definition}
\newtheorem{example}[theorem]{Example}
\theoremstyle{remark}
\begin{document}

\author{A. N. Abyzov}
\address{Abyzov Adel Nailevich, Chair of Algebra and Mathematical Logic, Kazan (Volga Region) Federal Univesrity, 18 Kremlyovskaya str., Kazan, 420008 Russia}
\email{aabyzov@ksu.ru, Adel.Abyzov@ksu.ru}

% second author
\author{T. H. N. Nhan}
\address{Tran Hoai Ngoc Nhan, Department of IT and Mathematics Teacher Training, Dong Thap University, Vietnam}
\curraddr{Chair of Algebra and Mathematical Logic, Kazan (Volga Region) Federal Univesrity, 18 Kremlyovskaya str., Kazan, 420008 Russia}
\email{tranhoaingocnhan@gmail.com}

\title[CS-Rickart modules]{CS-Rickart modules}

\begin{abstract}
In this paper, we introduce and study the concept of CS-Rickart modules, that is a module analogue of the concept
of ACS rings. A ring $R$ is called a right weakly semihereditary ring if every its  finitly generated right ideal is of the form $P\oplus S,$ where $P_R$ is a projective module and $S_R$ is a singular module. We describe the ring $R$ over
which $\mathrm{Mat}_n (R)$ is a right ACS ring for any
$n \in \mathbb {N}$. We show that every finitely generated projective right $R$-module will to be a CS-Rickart module, is precisely when $R$ is a right weakly semihereditary ring. Also, we prove that if $R$ is a right weakly semihereditary ring, then  every finitely generated submodule of a projective right $R$-module has the form
$P_{1}\oplus \ldots\oplus  P_{n}\oplus S$, where every $P_{1}, \ldots,  P_{n}$ is a projective module which is isomorphic to a submodule of $R_{R}$, and $S_R$ is a singular module.
As corollaries we obtain some well-known properties of Rickart modules and semihereditary rings.
\end{abstract}

\subjclass[2010]{ Primary 16D10; Secondary 16D40, 16D80.}
\keywords{CS-Rickart modules, Rickart modules, ACS rings, semihereditary rings.}
\date{\today}

\maketitle

\section{Introduction}

Throughout this paper, all rings are assumed to be associated with a nonzero unity element,
and all modules are assumed to be unitary right modules. The notations
$N \unlhd M$, or $N \ll M$ mean that $N$ is
an \textit{essential} (or \textit{large}) submodule of $M$, or $N$ is a \textit{superfluous} (or \textit{small}) submodule of $M$, respectively. The largest
singular submodule of $M$ will be denoted by $Z(M)$.

The concept of p.p. rings was first introduced by Hattori in 1960 (see [\ref{AH}]), and
further studied by many authors (see, for example, [\ref{pp2}, \ref{pp1}, \ref{pp3}]). A ring $R$ is called a right \textit{p.p. ring} (or a \textit{Rickart ring}) if every principal right ideal of $R$ is to be projective, or equivalently, if the right annihilator of each element
of $R$ is generated by an idempotent as a right ideal. According to [\ref{GL1}], the notion of Rickart rings was generalized to a module theoretic version. A right $R$-module $M$ is
called a \textit{Rickart module} if for every
$\varphi \in S=\mathrm{End}_{R}(M)$ then
$\mathrm{Ker}\varphi = eM$ for
some $e^2=e\in S$.  The notion of dual Rickart modules was introduced in [\ref{GL2}].
A module $M$ is called a \textit{d-Rickart module} (or a \textit{dual Rickart module}) if for every
$\varphi \in S=\mathrm{End}_{R}(M)$ then
$\mathrm{Im}\varphi = eM$ for some $e^2=e\in S$. Many characterizations
of Rickart modules and d-Rickart modules are given in [\ref{GL1}, \ref{GL2}].
Article
[\ref{GL3}] has described the class of rings over which each finitely
generated projective right module as a Rickart module. Moreover, the authors also studied the structure of rings over which
direct sums of Rickart modules are also Rickart modules.

A ring $R$ is called a right \textit{ACS ring} if the right annihilator of every element of $R$ is an essential submodule of a direct summand of $R_R$ (see [\ref{WJ}, \ref{WK}]). The concept of ACS rings has been studied in
articles [\ref{WJ}, \ref{WK}, \ref{QZ}], is a proper generalization of Rickart rings. It is known that $R$ is a right ACS ring which is also a right $C_2$-ring if and only if $R$ is semiregular and $J(R) = Z_r(R)$ (see [\ref{WK}, Theorem 2.4]).

Recall that a module $M$ is called an \textit{SIP module} (respectively, \textit{SSP
module}) if the intersection (or the sum) of any two
direct summands of $M$ is also a direct summand of
$M$. Modules having the SIP or the SSP have been studied by many authors (see [\ref{Ga}, \ref{HS},  \ref{Wi}]). It is shown
that every Rickart module has the SIP and every d-Rickart module has the SSP (see [\ref{GL1}, Proposition 2.16] and [\ref{GL3}, Proposition 2.11]).

A module $M$ is called an \textit{SIP-CS module} if
the intersection of any two direct summands of $M$
is essential in a direct summand of $M$.
 We say that a submodule $N$ of a module $M$ \textit{lies above a direct summand} of $M$ if there is a decomposition $M=N_1\oplus
N_2$ such that $N_1 \subset N$ and $N_2\cap N$ small in $N_2$.
A module $M$ is called an \textit{SSP-d-CS module} if the sum of any
two direct summands of $M$ lies above a direct summand
of $M$. While SIP-CS modules are proper generalizations of both SIP modules and CS modules (see [\ref{s1}, \ref{s2}]), SSP-d-CS are proper generalizations of both SSP modules and d-CS modules (see [\ref{s3}).

A module is called a \textit{CS module} if every its submodule is essential in a
direct summand.
Dually, a module is called a \textit{d-CS module} (or a \textit{lifting module}) if every its submodule  lies above a direct summand(see [\ref{JC}, \ref{ex}]).

In  this  paper,  we
introduce and study the notion of CS-Rickart modules, that is a module analogue of the notion
of ACS rings.  A module $M$ is called a \textit{CS-Rickart module} if $\mathrm{Ker}\varphi$
is essential in a direct summand of $M$ for every $\varphi \in S=\mathrm{End}_{R}(M)$.
Dually, a module $M$ is called a \textit{d-CS-Rickart module} if
$\mathrm{Im}\varphi$ lies above a direct summand of $M$ for every $\varphi \in S=\mathrm{End}_{R}(M)$. In \cite{AN} CS-Rickart modules were first introduced and some of the results of
this paper have been presented without proof as a brief
communication.

In Section 2, we provide some characterizations and investigate their properties. We show that any direct
summand of a CS-Rickart module or CS-Rickart module inherits the property (see Lemma \ref{1}), while this is not so for direct sums (see Example \ref{e2}). We show that the class of rings over which every right module is CS-Rickart,
is precisely the class of rings over which every right module is d-CS-Rickart (see Lemma \ref{a2}). We show that every CS-Rickart module has the SIP-CS and every d-CS-Rickart module has the SSP-d-CS.
Also we prove that, the right ACS and the
right essentially Baer properties coincide for a ring with the minimum condition on
right annihilators (see Proposition \ref{4b}).

In Section 3, we establish connections between the CS-Rickart
property of a module and its d-CS-Rickart property (see Theorem \ref{9}) which is a generalization of [\ref{WK}, Theorem 2.4].
We also give conditions for a finitely generated projective module to be a CS-Rickart module which is also a $C_2$ module (see Theorem \ref{11}, Theorem \ref{12}). We conclude this section with conditions which allow a direct sum of
CS-Rickart modules (respectively, d-CS-Rickart modules) to be CS-Rickart (or d-CS-Rickart) (see Theorem \ref{9a}, Theorem \ref{9b}).

We call a module $M_R$ is a \textit{weakly semihereditary module} if every finitely generated submodule of $M$ is of the form $P\oplus S$, where $P_R$ is a projective module and $S_R$ is a singular module. A ring $R$ is called a \textit{right weakly semihereditary ring} if the module $R_{R}$ is weakly semihereditary. In Section 4, it is shown that every finitely generated projective right $R$-module will to be a CS-Rickart module, is precisely when $R$ is a right weakly semihereditary ring. Also we prove that $R$ is a right weakly semihereditary ring if and only if every finitely generated submodule of a projective right $R$-module has the form
$P_{1}\oplus \ldots\oplus  P_{n}\oplus S$, where every $P_{1}, \ldots,  P_{n}$ is a projective module which is isomorphic to a submodule of $R_{R}$  and $S$ is a singular module.

Our results in this paper also give uniform approaches to the works in
[\ref{GL1}], [\ref{GL3}], [\ref{GL2}], [\ref{WK}], [\ref{QZ}].
%[\ref{GL3}, Theorem 2.29], [\ref{GL3}, Theorem 3.6], [\ref{GL2}, Theorem 5.11] and [\ref{WK}, Theorem 2.4].
\section{Preliminary results}
The following lemmas can be verified directly.
They show that every direct summand of a CS-Rickart module (or d-CS-Rickart module) is inherited the property; direct sums of CS-Rickart modules (or d-CS-Rickart modules), submodules of CS-Rickart modules are inherited the property by some conditions.

%Their proofs follow on similar lines to that of [\ref{GL1}, Theorem 2.7],  [\ref{GL3}, Proposition 2.8], [\ref{GL2}, Proposition 2.34] and [\ref{GL3}, Proposition 5.14].

\begin{lemma}\label{1}
The following implications hold:
\begin{itemize}
\item[(1)] Every direct summand of a CS-Rickart module is a CS-Rickart module.

\item[(2)] Every direct summand of a d-CS-Rickart module is a d-CS-Rickart module.
\end{itemize}
\end{lemma}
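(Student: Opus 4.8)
\emph{Plan.} I would handle both parts at once by extending an endomorphism of the summand to the ambient module. Suppose $M$ is CS-Rickart (resp. d-CS-Rickart), write $M=N\oplus N'$, and let $\iota\colon N\hookrightarrow M$ and $\pi\colon M\to N$ be the canonical inclusion and projection. Given $\psi\in\mathrm{End}_R(N)$, set $\varphi=\iota\psi\pi\in\mathrm{End}_R(M)$, so that $\varphi(n+n')=\psi(n)$; then $\mathrm{Ker}\,\varphi=\mathrm{Ker}\,\psi\oplus N'$ and $\mathrm{Im}\,\varphi=\mathrm{Im}\,\psi\subseteq N$. Everything reduces to transporting the decomposition produced for $\varphi$ back down to $N$.

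For (1): since $M$ is CS-Rickart, $\mathrm{Ker}\,\varphi$ is essential in a direct summand $D$ of $M$. As $N'\subseteq\mathrm{Ker}\,\varphi\subseteq D$, the modular law gives $D=N'\oplus(D\cap N)$, and since $N'\subseteq D$ are both direct summands of $M$ it is routine that $D\cap N$ is a direct summand of $N$ (it corresponds to $D/N'$ under $N\cong M/N'$). Next, $\mathrm{Ker}\,\psi\subseteq\mathrm{Ker}\,\varphi\cap N=(\mathrm{Ker}\,\psi\oplus N')\cap N=\mathrm{Ker}\,\psi$, so in fact $\mathrm{Ker}\,\psi=\mathrm{Ker}\,\varphi\cap N$ and $\mathrm{Ker}\,\psi\subseteq D$. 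Finally, for any $0\neq x\in D\cap N$ essentiality of $\mathrm{Ker}\,\varphi$ in $D$ yields $0\neq xr\in\mathrm{Ker}\,\varphi$ for some $r\in R$, and $xr\in N$ forces $xr\in\mathrm{Ker}\,\varphi\cap N=\mathrm{Ker}\,\psi$; hence $\mathrm{Ker}\,\psi\unlhd D\cap N$ and $N$ is CS-Rickart.

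For (2): since $M$ is d-CS-Rickart, $\mathrm{Im}\,\psi$ lies above a direct summand of $M$, i.e. $M=M_1\oplus M_2$ with $M_1\subseteq\mathrm{Im}\,\psi$ and $M_2\cap\mathrm{Im}\,\psi\ll M_2$. From $M_1\subseteq\mathrm{Im}\,\psi\subseteq N$ and the modular law, $\mathrm{Im}\,\psi=M_1\oplus(M_2\cap\mathrm{Im}\,\psi)$, while $M_1$, being a direct summand of $M$ contained in the direct summand $N$, is a direct summand of $N$. For the small part: $M_2\cap\mathrm{Im}\,\psi\ll M_2$ implies $M_2\cap\mathrm{Im}\,\psi\ll M$ (a submodule small in a submodule is small in the whole module, by a modular-law argument), and then, since $M_2\cap\mathrm{Im}\,\psi\subseteq N$ with $N$ a direct summand of $M$, we get $M_2\cap\mathrm{Im}\,\psi\ll N$. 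Thus $\mathrm{Im}\,\psi$ is the sum of a direct summand of $N$ and a submodule small in $N$, which (by the standard reformulation of ``lies above a direct summand'') means exactly that $\mathrm{Im}\,\psi$ lies above a direct summand of $N$; hence $N$ is d-CS-Rickart.

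The genuinely routine part is (1); the only place needing care is (2), where one must resist checking the definition of ``lies above a direct summand'' directly on the induced decomposition $N=M_1\oplus(N\cap M_2)$ — because smallness in $M_2$ does not by itself pass to $N\cap M_2$ — and instead route the small submodule up to $M$ and back down into the direct summand $N$, combining this with the ``summand plus small submodule'' characterization of lying above a direct summand.
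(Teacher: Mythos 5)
Your argument is correct, and in fact the paper gives no proof at all for this lemma (it is dismissed with ``can be verified directly''), so your write-up is simply the intended direct verification via the endomorphism $\varphi=\iota\psi\pi$. You are also right to flag the one non-routine point, the smallness transfer in part (2): passing $M_2\cap\mathrm{Im}\,\psi\ll M_2$ up to $M$ and down into the summand $N$, and then invoking the ``direct summand plus small submodule'' characterization of lying above a direct summand, is exactly the mechanism the paper itself uses elsewhere (citing [JC, 22.1] in the proof of Proposition \ref{6}).
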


\begin{lemma}
Let $M =\bigoplus_{i\in I}M_i$ and $M_i$ is a fully invariant
submodule of $M$ for every $i\in I$. Then the following implications hold:
\begin{itemize}
\item[(1)]  If $I$ is an arbitrary index set then $M$ is a CS-Rickart module if and only if each $M_i$ is a CS-Rickart module for every $i\in I$.
\item[(2)] If $I=\{1, \ldots, n\}$ then $M$ is a d-CS-Rickart module if and only if each $M_i$ is a d-CS-Rickart module for every $i\in I$.
\end{itemize}
\end{lemma}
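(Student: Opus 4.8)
The plan is to handle the two statements in parallel, using the fact that a fully invariant direct summand decomposition forces every endomorphism of $M$ to act "diagonally." Concretely, if $M=\bigoplus_{i\in I}M_i$ with each $M_i$ fully invariant in $M$, then for any $\varphi\in S=\mathrm{End}_R(M)$ we have $\varphi(M_i)\subseteq M_i$, so $\varphi$ restricts to an endomorphism $\varphi_i=\varphi|_{M_i}\in\mathrm{End}_R(M_i)$, and $\varphi=\bigoplus_{i\in I}\varphi_i$ in the obvious sense. The first thing I would record is the resulting description of kernels and images: $\mathrm{Ker}\,\varphi=\bigoplus_{i\in I}\mathrm{Ker}\,\varphi_i$ and $\mathrm{Im}\,\varphi=\bigoplus_{i\in I}\mathrm{Im}\,\varphi_i$. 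This is the structural heart of the argument; everything else is assembling essentiality/lies-above statements summand by summand.

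For part (1), assume each $M_i$ is CS-Rickart. Given $\varphi\in S$, for each $i$ there is a direct summand $D_i\leq^\oplus M_i$ with $\mathrm{Ker}\,\varphi_i\unlhd D_i$. Writing $M_i=D_i\oplus D_i'$, I get $M=\bigl(\bigoplus_i D_i\bigr)\oplus\bigl(\bigoplus_i D_i'\bigr)$, so $D:=\bigoplus_i D_i$ is a direct summand of $M$. Since an arbitrary direct sum of essential submodules is essential in the direct sum of the overmodules, $\mathrm{Ker}\,\varphi=\bigoplus_i\mathrm{Ker}\,\varphi_i\unlhd\bigoplus_i D_i=D$, so $M$ is CS-Rickart. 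Conversely, if $M$ is CS-Rickart then by Lemma \ref{1}(1) every direct summand — in particular every $M_i$ — is CS-Rickart; this direction needs no finiteness, which is why (1) works for arbitrary $I$.

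For part (2), assume $I=\{1,\dots,n\}$ and each $M_i$ is d-CS-Rickart. Given $\varphi\in S$, for each $i$ pick a decomposition $M_i=N_i\oplus N_i'$ with $N_i\subseteq\mathrm{Im}\,\varphi_i$ and $N_i'\cap\mathrm{Im}\,\varphi_i\ll N_i'$. Then $N:=\bigoplus_{i=1}^n N_i\subseteq\bigoplus_{i=1}^n\mathrm{Im}\,\varphi_i=\mathrm{Im}\,\varphi$, and $N':=\bigoplus_{i=1}^n N_i'$ satisfies $M=N\oplus N'$; one checks $N'\cap\mathrm{Im}\,\varphi=\bigoplus_{i=1}^n(N_i'\cap\mathrm{Im}\,\varphi_i)$ (using again that $\mathrm{Im}\,\varphi$ splits along the fully invariant decomposition), and a finite direct sum of small submodules of the respective summands is small in the direct sum. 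Hence $\mathrm{Im}\,\varphi$ lies above the direct summand $N$ of $M$, so $M$ is d-CS-Rickart. The converse is Lemma \ref{1}(2). The main obstacle — and the reason $n$ must be finite in (2) — is precisely that "superfluous" is not preserved under arbitrary direct sums (an infinite direct sum of small submodules need not be small), whereas "essential" is; so I would be careful to invoke finiteness only where it is genuinely needed, and to verify the identity $N'\cap\mathrm{Im}\,\varphi=\bigoplus_i(N_i'\cap\mathrm{Im}\,\varphi_i)$ cleanly from the diagonal form of $\varphi$.
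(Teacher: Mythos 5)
Your proof is correct, and it is exactly the direct verification the paper has in mind: the paper states this lemma without proof (``can be verified directly''), and your argument --- the diagonal action of endomorphisms forced by full invariance, the splitting of kernels and images along the decomposition, essentiality of arbitrary direct sums of essential submodules for (1), and smallness of finite sums of small submodules for (2), with the converses via Lemma \ref{1} --- is the standard way to carry that out. Your remark on why (2) needs $I$ finite while (1) does not is also the right one.
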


\begin{lemma}
Let $M$ be a CS-Rickart module, and let $N$ be a fully invariant
submodule of $M$. If every endomorphism $\varphi\in
\mathrm{End}_R(N)$ can be extended to an endomorphism
$\overline{\varphi}\in\mathrm{End}_R(M)$, then $N$ is a CS-Rickart
module.
\end{lemma}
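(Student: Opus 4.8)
The plan is to prove that if $N$ is a fully invariant submodule of a CS-Rickart module $M$, and every $\varphi \in \mathrm{End}_R(N)$ extends to some $\overline{\varphi} \in \mathrm{End}_R(M)$, then $N$ is CS-Rickart. So fix an arbitrary $\varphi \in \mathrm{End}_R(N)$ and take an extension $\overline{\varphi} \in \mathrm{End}_R(M)$. Since $M$ is CS-Rickart, $\mathrm{Ker}\,\overline{\varphi}$ is essential in a direct summand $eM$ of $M$ for some idempotent $e \in \mathrm{End}_R(M)$. The first key step is to relate $\mathrm{Ker}\,\varphi$ to $\mathrm{Ker}\,\overline{\varphi}$: because $\overline{\varphi}$ restricts to $\varphi$ on $N$, we have $\mathrm{Ker}\,\varphi = \mathrm{Ker}\,\overline{\varphi} \cap N$.

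The second key step uses full invariance of $N$ in $M$. A fully invariant submodule is in particular stable under every idempotent endomorphism, so $eN \subseteq N$, and moreover $e$ restricts to an idempotent endomorphism of $N$; hence $eN$ is a direct summand of $N$ (with complement $(1-e)N$, using that $N = eN \oplus (1-e)N$ since $e|_N$ is idempotent). So $eN$ is a genuine direct summand of $N$, and it will serve as the direct summand in which $\mathrm{Ker}\,\varphi$ must be shown to be essential.

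The third step is the essentiality claim: I must show $\mathrm{Ker}\,\varphi = \mathrm{Ker}\,\overline{\varphi} \cap N$ is essential in $eN$. Here I would use the standard fact that if $A \unlhd B$ in $M$ then $A \cap C \unlhd B \cap C$ for any submodule $C$; applying this with $A = \mathrm{Ker}\,\overline{\varphi}$, $B = eM$, $C = N$ gives $\mathrm{Ker}\,\overline{\varphi} \cap N \unlhd eM \cap N$. It remains only to observe $eM \cap N = eN$: the inclusion $eN \subseteq eM \cap N$ is clear, and conversely if $x \in eM \cap N$ then $x = ex \in eN$ since $x \in N$. Combining, $\mathrm{Ker}\,\varphi \unlhd eN$ with $eN$ a direct summand of $N$, which is exactly the CS-Rickart condition for $\varphi$.

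I do not expect a serious obstacle here; the argument is essentially a diagram-chase combined with two elementary lemmas (intersection preserves essentiality, and fully invariant submodules split off the idempotents of the ambient endomorphism ring). The only point requiring a moment's care is justifying that $e|_N$ is a well-defined idempotent endomorphism of $N$ — this is where full invariance (not merely being a direct summand) is genuinely used — and that the resulting decomposition $N = eN \oplus (1-e)N$ really is internal, which follows since $1-e$ is also fully invariant-compatible and $n = en + (1-e)n$ for all $n \in N$.
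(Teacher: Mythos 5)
Your proof is correct and is precisely the ``direct verification'' that the paper omits: extend $\varphi$ to $\overline{\varphi}$, use full invariance to see that the idempotent $e$ with $\mathrm{Ker}\,\overline{\varphi}\unlhd eM$ restricts to an idempotent of $N$ giving the summand $eN=eM\cap N$, and then intersect the essential inclusion with $N$ to get $\mathrm{Ker}\,\varphi=\mathrm{Ker}\,\overline{\varphi}\cap N\unlhd eN$. All the supporting facts you invoke (essentiality is preserved under intersection, and $N=eN\oplus(1-e)N$ internally) are justified correctly, so there is nothing to add.
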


% \begin{proof}
% Let $\varphi\in \mathrm{End}_R(N)$ be arbitrary. By assumption,
% there exists $\overline{\varphi}\in\mathrm{End}_R(M)$ such that
% $\overline{\varphi}|_N = \varphi$.
% Since $M$ is CS-Rickart,
% $\mathrm{Ker}\overline{\varphi} \unlhd eM$ for some $e = e^2 \in
% \mathrm{End}_R(M)$. Then
% $\mathrm{Ker}\varphi \mathrm{Ker}\overline{\varphi}\cap N\unlhd eM\cap N=
% eN$. Therefore, $N$ is a CS-Rickart module.
% \end{proof}

While every direct summand of CS-Rickart module inherits the property,
this is not so for direct sums (see Example \ref{e2}). We give conditions which allow direct sums of
CS-Rickart modules to be CS-Rickart.

\begin{lemma}\label{2}
Let $A$ be an uniform hereditary right  $R$-module, $B$ be a singular uniform Artinian right $R$-module. Then the following statements hold:

\begin{itemize}
 \item [(1)] $A \oplus B $ is a CS-Rickart module.

 \item [(2)] If $B$ is not an $A$-injective module, then $A \oplus B$ is not a CS module.
 \end{itemize}
\end{lemma}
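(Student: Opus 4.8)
The plan is to work directly with $M = A \oplus B$ and an arbitrary $\varphi \in S = \mathrm{End}_R(M)$, showing $\mathrm{Ker}\,\varphi$ is essential in a direct summand of $M$. Write $\varphi$ as a $2\times 2$ matrix $\begin{pmatrix} \varphi_{11} & \varphi_{12} \\ \varphi_{21} & \varphi_{22} \end{pmatrix}$ with respect to the decomposition $A\oplus B$. The key structural facts I would exploit are: (i) $\mathrm{Hom}_R(A, B) = 0$ because $A$ is nonsingular (being hereditary and uniform, hence isomorphic to a right ideal — actually I should be careful; "hereditary module" here should mean every submodule is projective, and such a module over... the point is $A$ should turn out to be nonsingular so any map to the singular module $B$ is zero), so $\varphi_{21} = 0$; (ii) $B$ is Artinian, so the restriction of $\varphi$ to $B$ has an image and kernel that are well-controlled.

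**Main computation.**

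With $\varphi_{21}=0$, the map $\varphi$ is "upper triangular": $\varphi(a,b) = (\varphi_{11}(a) + \varphi_{12}(b), \varphi_{22}(b))$. First I would analyze $\varphi_{22}\in\mathrm{End}_R(B)$. Since $B$ is uniform Artinian, it is in particular a module of finite length, so by Fitting's lemma $B = B_1 \oplus B_2$ with $\varphi_{22}|_{B_1}$ nilpotent and $\varphi_{22}|_{B_2}$ an automorphism; uniformity forces one summand to be zero, so either $\varphi_{22}$ is nilpotent or $\varphi_{22}$ is an automorphism of $B$. I would treat these two cases. If $\varphi_{22}$ is an automorphism, then $\mathrm{Ker}\,\varphi$ projects to $0$ in $B$ and one checks $\mathrm{Ker}\,\varphi = \{(a,b) : b = -\varphi_{22}^{-1}\varphi_{12}... \}$ — more carefully, $(a,b)\in\mathrm{Ker}\,\varphi$ forces $\varphi_{22}(b)=0$ hence $b=0$, hence $\varphi_{11}(a)=0$, so $\mathrm{Ker}\,\varphi = \mathrm{Ker}\,\varphi_{11} \oplus 0$; since $A$ is hereditary and uniform, $\mathrm{Ker}\,\varphi_{11}$ is either $0$ or essential in $A$ (a nonzero submodule of a uniform module is essential), so it is essential in the direct summand $A$ or in $0$. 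If $\varphi_{22}$ is nilpotent, then $\mathrm{Im}\,\varphi_{22} \subsetneq B$, and since $B$ is uniform Artinian, $\mathrm{Ker}\,\varphi_{22}$ is a nonzero (essential) submodule of $B$. In this case I would show $\mathrm{Ker}\,\varphi$ is essential in $A \oplus \mathrm{Ker}(\text{something})$; the cleanest approach: the projection $\pi_B(\mathrm{Ker}\,\varphi) \subseteq \mathrm{Ker}\,\varphi_{22}$, and for each $b$ in that projection the fiber is a coset of $\mathrm{Ker}\,\varphi_{11}$ in $A$. Using that $A$ is uniform so $\mathrm{Ker}\,\varphi_{11}$ is $0$ or essential, and that $B$ is uniform so its relevant submodules are $0$ or essential, one checks directly that $\mathrm{Ker}\,\varphi$ is essential in one of the four summands $0,\ A\oplus 0,\ 0\oplus B,\ A\oplus B$ — each of which is a direct summand of $M$. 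This establishes (1).

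**Part (2).**

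For (2), suppose $B$ is not $A$-injective. Then there is a submodule $A' \le A$ and a homomorphism $f: A' \to B$ that does not extend to $A$. Consider the submodule $N = \{(a', f(a')) : a' \in A'\} \le A \oplus B$, the "graph" of $f$. I would show $N$ is not essential in any direct summand of $M$, which contradicts $M$ being CS. The idea: if $N$ were essential in a direct summand $D$ of $M=A\oplus B$, then because $A$ is uniform and $B$ is uniform, the decomposition theory of $D$ is constrained; one argues that $D$ would have to be (up to the relevant projections) a graph of an extension of $f$, or that $D = A$-component forces a splitting that yields the extension. The precise argument: $N \cap B = 0$ and $N \cong A'$; if $N \unlhd D \le^{\oplus} M$, then $D \cap B$ is a submodule of $B$ with $D\cap B \cap N = 0$, and since $N \unlhd D$ this gives $D \cap B = 0$; then $D$ embeds via the projection $\pi_A$ into $A$, and since $D$ is a direct summand of $A\oplus B$ containing the graph $N$ of $f$... one shows $\pi_A|_D$ is an isomorphism onto a summand of $A$ containing $A'$, hence onto $A$ itself (as $A$ is uniform, or using that $A'$ essential in $A$ when $f\ne 0$ — here I'd need $A'$ essential in $A$, which may require choosing $f$ with essential domain, possible since $B$ is uniform so any nonzero partial map can be "spread out", or I argue more carefully), and then $(\pi_B|_D)\circ(\pi_A|_D)^{-1}: A \to B$ is an extension of $f$, a contradiction.

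**Main obstacle.**

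The delicate point is part (2): carefully producing the non-extendable map with the right essentiality properties, and rigorously ruling out that the graph $N$ sits essentially inside some direct summand $D$ that is neither $A$ nor contained in a graph. The case analysis in part (1) when $\varphi_{22}$ is nilpotent — tracking exactly which of the finitely many summands of $A\oplus B$ contains $\mathrm{Ker}\,\varphi$ essentially — is routine but requires attention, and I also want to double-check the claim $\mathrm{Hom}_R(A,B)=0$, which rests on $A$ being nonsingular; if "hereditary" in the hypothesis does not immediately give nonsingularity I would instead argue that any $g: A \to B$ has $\mathrm{Im}\,g$ a singular image of a (uniform, hereditary, hence by a standard argument nonsingular) module, forcing $\mathrm{Im}\,g = 0$. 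That nonsingularity of $A$ is the one external fact I would want to nail down first before the rest of the argument goes through cleanly.
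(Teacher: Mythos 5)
The central structural claim in your part (1) is backwards, and it derails the whole case analysis. You assert $\mathrm{Hom}_R(A,B)=0$ because $A$ is nonsingular; but nonsingularity of the \emph{domain} does not kill maps into a singular module (take $A=\mathbb{Z}$, $B=\mathbb{Z}/4\mathbb{Z}$: all hypotheses hold and $\mathrm{Hom}_R(A,B)\neq 0$ --- indeed part (2) of the lemma is entirely about maps from submodules of $A$ into $B$). What is true, and what the paper actually uses, is $\mathrm{Hom}_R(B,A)=0$: the image of the singular module $B$ in $A$ is a singular submodule of the hereditary module $A$, hence simultaneously singular and projective, hence zero. So $\varphi$ is triangular the other way, $\varphi(a,b)=(\alpha(a),\beta(a)+\gamma(b))$ with $\beta\colon A\to B$ possibly nonzero. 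With the correct triangularization your concluding claim --- that $\mathrm{Ker}\varphi$ is always essential in one of $0$, $A\oplus 0$, $0\oplus B$, $A\oplus B$ --- is false: when $\alpha=0$, $\gamma$ is injective (hence an automorphism, $B$ being Artinian) and $\beta\neq 0$, the kernel is the graph $\{(a,-\gamma^{-1}\beta(a))\mid a\in A\}$, a direct summand complementing $0\oplus B$ which is essential in none of your four candidates; this is exactly the case the paper's proof treats separately at the end. Two smaller defects: a uniform Artinian module need not have finite length (e.g.\ $\mathbb{Z}(p^{\infty})$), so Fitting's lemma does not give the nilpotent/automorphism dichotomy for the endomorphism of $B$; all you need, and all that is available, is ``injective implies automorphism'' (Artinian) versus ``kernel nonzero implies kernel essential'' (uniform).

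Your part (2) follows the paper's route but leaves the one nontrivial step unproved: from $N\unlhd D$ with $D$ a direct summand and $D\cap B=0$, you still must show $\pi_A(D)=A$ (equivalently $D\oplus B=M$) before $\pi_B\circ(\pi_A|_D)^{-1}$ can serve as an extension of $f$. The paper gets this from singularity: $D$ embeds in $A$, so $Z(D)=0$, so $B=Z(M)$ lies inside the complement $D'$ of $D$; then $A\cong M/B\cong D\oplus D'/B$, and indecomposability of $A$ together with $D\neq 0$ forces $D'=B$. No essentiality of $A'$ in $A$ is needed, so your worry on that point is moot. In summary, part (2) is repairable along the lines you sketch, but part (1) as written rests on a false vanishing claim and omits precisely the case that makes the statement nontrivial.
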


\begin{proof}(1) Let $f \in \mathrm {End}_{R}(A \oplus B)$ is an injective homomorphism and $\pi$ is the projection from $A \oplus B$ to $A$.
Since $A$ is a hereditary module and $B$ is a singular module,
$\pi f(B) = 0$ and so $\mathrm{Ker}(\pi f) = A_{0} \oplus B$, where
$A_{0}$ is a direct summand of $A$. Since $A$  is an
indecomposable module, either $A_ {0} = A$ or $A_{0}= 0$. If $ A_{0} = 0$ then
$\mathrm{Ker}f = \mathrm{Ker}f_{\mid B} $ and so $\mathrm{Ker}f \unlhd B$.
Otherwise, $A_ {0} = A$. Then the homomorphism $f$ induces a homomorphism from
$A$ to $B$ and so $\mathrm{Ker}f_ {\mid
A} \unlhd A$ by the singularity of $B$. If
$\mathrm{Ker} f_{\mid B} \neq 0$ then $\mathrm{Ker}f \unlhd
A \oplus B$. If $\mathrm{Ker} f_{\mid B} = 0 $ then $f$ induces an isomorphism from
$B$ to $B$ since $B$ is an Artinian
module. Therefore, for any element $a \in A $ there exists
$b \in B$ such that $f(a) = f(b) $ and so
$f(a-b) = 0$. Thus $\pi \mathrm{Ker}f = A$, $\mathrm {Ker} f \cap
B = 0$ and hence $\mathrm{Ker}f \oplus B = A \oplus B$.

(2) Since $B$ is not an $A$-injective, then there exists
a submodule $A_{0}$ of $A$ and a homomorphism $f: A_{0} \rightarrow
A$ which cannot be extended to a homomorphism from $A$ to
$B$. If the module $A \oplus B$ is a CS module, $N = \{a + f(a) \mid
a \in A_{0} \} \unlhd e(A \oplus B)$ for
some $e = e^2 \in \mathrm{End}_{R}(A \oplus B)$. Since $Z(e(A \oplus
B)) = 0$, $Z (A \oplus B) = B \subset (1 -e) (A \oplus B) $ and
consequently $B = (1 -e) (A \oplus B)$. As a result $A \oplus
B = e(A \oplus B) \oplus B$. This leads to
$e(A \oplus B) = \{a + f'(a) \mid a \in A \}$, $f'_{\mid A_{0}} = f$ for
some homomorphism $f': A \rightarrow B$.
This contradiction shows that the module $A \oplus B$ is not
a CS module.
\end{proof}

\begin{example}
If $R$ is a Dedekind domain and $P$ is a nonzero prime ideal of $R$,
then it is deduced from the previous lemma that $R$-module $R \oplus
(R / P^n)$, where $n$ is a natural number, is a CS-Rickart module
and is a direct sum of CS modules but is not a CS module.
\end{example}

We now describe the class of rings over
which every module is a CS-Rickart module.
\begin{lemma}\label{a2}
The following conditions are equivalent for a ring $R$:
\begin{itemize}
\item[(1)]  Every right $R$-module is d-CS-Rickart.

\item[(2)]  Every right $R$-module is CS-Rickart.

\item[(3)]  $R$ is a left and right Artinian serial ring with $(J(R))^2 = 0$.
\end{itemize}
\end{lemma}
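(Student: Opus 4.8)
The plan is to prove the chain $(3)\Rightarrow(2)\Rightarrow(1)\Rightarrow(3)$, since $(2)$ is the cleanest hypothesis to exploit and $(1)\Rightarrow(3)$ is likely the hardest direction. Throughout I will use that over a left and right Artinian serial ring $R$ with $(J(R))^2=0$, every right module is a direct sum of uniserial modules of composition length at most $2$, and every indecomposable has the form $e_iR$ or $e_iR/\mathrm{soc}(e_iR)$ for a primitive idempotent $e_i$; these are standard facts about serial rings (Nakayama). A decisive auxiliary observation is that for such $R$ one has $Z(R_R)=J(R)$ (by the $(J(R))^2=0$ condition each simple module is either projective or singular), so the projective indecomposables split as in Lemma~\ref{2}(1): an indecomposable $e_iR$ is either simple or uniform with singular simple socle.

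For $(3)\Rightarrow(2)$: I would first reduce, via Lemma~\ref{1}, to showing that every indecomposable decomposition $M=\bigoplus_{i\in I}U_i$ into uniserial modules of length $\le 2$ yields a CS-Rickart module. Given $\varphi\in\mathrm{End}_R(M)$, I want to show $\mathrm{Ker}\,\varphi$ is essential in a direct summand. The key structural input is that over an Artinian serial ring with $(J(R))^2=0$, the set of isomorphism types of indecomposables is finite and any homomorphism between two uniserials of length $\le 2$ is either zero, an isomorphism, or has image/kernel equal to a simple (hence the only nontrivial kernels are socles, which are either direct summands when the module is semisimple-projective, or essential submodules otherwise). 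Combining this with a Harada–Sai / exchange-type argument on the finitely many summand types, together with Lemma~\ref{2}(1) applied componentwise to pairs ``projective uniform $\oplus$ singular simple,'' I expect $\mathrm{Ker}\,\varphi$ to be essential in a suitable sub-sum of the $U_i$, which is a direct summand. The $(2)\Rightarrow(1)$ step should follow from Theorem~\ref{9} (which connects the CS-Rickart and d-CS-Rickart properties), or can be done directly by the same structural analysis applied to images rather than kernels; in fact once one knows $R$ is as in $(3)$, the ``self-dual'' nature of the module category (every module a direct sum of uniserials of length $\le 2$, and the duality $D=\mathrm{Hom}(-,{}_RR{}_R{}^{\!*})$ preserving this class) makes CS-Rickart and d-CS-Rickart symmetric.

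The main obstacle is $(1)\Rightarrow(3)$, equivalently $(2)\Rightarrow(3)$. Here I would proceed as follows. First, applying the CS-Rickart hypothesis to cyclic modules $R/xR$-type constructions and to $R_R$ itself shows $R$ is a right ACS ring; combined with a $C_2$-type consequence of ``every module is CS-Rickart'' (since every module, in particular every injective-ish module, has nice kernels), I would aim to show $R$ is semiperfect with $J(R)=Z(R_R)$, via the criterion quoted in the introduction ([\ref{WK}, Theorem 2.4]): $R$ right ACS and right $C_2$ iff $R$ semiregular with $J(R)=Z_r(R)$. Next, to upgrade ``semiregular'' to ``Artinian serial with $(J(R))^2=0$'': the condition that \emph{every} right module (not just finitely generated ones) is CS-Rickart is very restrictive — I would test it on $\bigoplus$ of copies of simple modules and on infinite direct products/sums to force $R$ to be right Noetherian (indeed right Artinian), and test it on modules of the form $E(S_1)\oplus S_2$ for simples $S_1,S_2$, using Lemma~\ref{2}(2) in contrapositive form, to force every module to be a direct sum of modules of length $\le 2$ with the injectivity pattern that characterizes serial rings with radical square zero. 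Concretely, if $(J(R))^2\neq 0$ there is an indecomposable of length $\ge 3$; building an endomorphism of a suitable direct sum whose kernel is a length-$1$ submodule sitting essentially in nothing split would contradict CS-Rickart. The bookkeeping needed to turn these tests into the precise statement ``$R$ left and right Artinian serial with $(J(R))^2=0$'' is the delicate part, and I would lean on the known structure theory of rings all of whose modules are direct sums of uniserials (the Warfield/Nakayama characterization of serial rings) to close the argument rather than reprove it from scratch.
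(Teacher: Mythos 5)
Your proposal does not close the hard direction, and it misses the single idea that makes the whole lemma a short argument. The paper's proof of $(1)\Rightarrow(3)$ and $(2)\Rightarrow(3)$ rests on a reduction you never find: given an \emph{arbitrary} module $M$ and an \emph{arbitrary} submodule $N\le M$, form the auxiliary module $M\oplus(M/N)$ and the endomorphism $f(m,x)=(0,m+N)$; its kernel is $(N,M/N)$, so hypothesis (2) places $N$ essentially inside a direct summand of $M$, i.e.\ \emph{every} module is CS. Dually, the endomorphism $f(n,m)=(0,n)$ of $N\oplus M$ has image $(0,N)$, so hypothesis (1) makes every module lifting. One then quotes the Vanaja--Purav theorem [\ref{VanP92}]: a ring over which every right module is CS (equivalently, over which every right module is lifting) is exactly a left and right Artinian serial ring with $(J(R))^2=0$. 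Your plan for this direction (pass through ACS/$C_2$/semiregularity, then ``test'' the hypothesis on assorted modules and invoke the Warfield--Nakayama theory of rings whose modules are direct sums of uniserials) aims at the wrong target theorem and, as you concede, leaves the decisive bookkeeping undone; the semiregularity route in particular only controls finitely generated behaviour and cannot by itself force the Artinian serial conclusion. So $(1)\Rightarrow(3)$ and $(2)\Rightarrow(3)$ are genuinely unproved in your write-up.

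The converse directions are also much heavier than they need to be. Once one knows (from the same reference) that over a ring as in (3) every module is CS and every module is lifting, both (1) and (2) are immediate: a kernel, like any submodule, is essential in a direct summand, and an image, like any submodule, lies above one. Your structural analysis via uniserial decompositions, Harada--Sai type arguments and a self-duality of the module category is unnecessary and is itself only sketched.
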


\begin{proof}
The implications $(3) \Rightarrow (1)$ and $(3) \Rightarrow (2)$ are concluded from [\ref{VanP92}].

$(1) \Rightarrow (3)$. According to [\ref{VanP92}], it suffices to show that every right module over the ring $R$ is a lifting module. Let $M$ be an arbitrary right $R$-module and $N$ be a submodule of $M$. Consider the module $N \oplus M$ and the homomorphism $f \in \mathrm{End}(N \oplus M)$ is defined by $f(n, m)= (0,  n)$. Then $f(N \oplus M) = (0, N)$ lies above a direct summand of $N \oplus M$. Consequently, the module $N$ lies above a direct summand of $M$.

$(2) \Rightarrow (3)$. According to [\ref{VanP92}], it suffices to show that every right module over $R$ is a CS module. Let $M$ be an arbitrary right $R$-module and $N$ be a submodule of $M$. Consider the module $M \oplus (M /N) $ and the homomorphism $f \in \mathrm{End}(M \oplus (M /N))$ is defined by $f(m, n)=(0, m + N)$. Then for some idempotent $e \in \mathrm{End}(M \oplus (M / N))$, the module $e (M \oplus (M / N))$ is an essential extension of the module $\mathrm{Ker}f = (N, M/N)$. Consequently,  the direct summand $e (M \oplus (M / N)) \cap (M, 0)$ of $(M, 0)$ is an essential extension of $(N, 0)$.
\end{proof}

It is easy to see that a right Rickart module (respectively d-Rickart module) is a right CS-Rickart module (or d-CS-Rickart module), but the converse is not true, in general. A module $M$ is called a \textit{$\mathcal{K}$-nonsingular module} if, for every essential submodule
$N$ of $M$ and every homomorphism $\varphi \in \mathrm{End}_R (M)$, from the equation $\varphi(N) = 0$ implies that
$\varphi = 0$. Dually, a module $M$ is
called a \textit{$\mathcal{T}$-noncosingular module} if, for every homomorphism $0\not=\varphi \in \mathrm{End}_R (M)$, $\mathrm{Im}\varphi$
is not small in $M$. We now establish a connection between CS-Rickart modules and Rickart modules. The following two lemmas can be verified directly. % Their proofs are similar to [\ref{B1}, Lemma 2.14] and [\ref{DB}, Lemma 2.11].
\begin{lemma} \label{3} For a right $R$-module $M$, the following conditions are equivalent:
\begin{itemize}
\item[(1)] $M$ is a $\mathcal{K}$-nonsingular CS-Rickart module.
\item[(2)] $M$ is a Rickart module.
\end{itemize}
\end{lemma}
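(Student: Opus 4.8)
The plan is to prove the two implications separately. The implication $(2)\Rightarrow(1)$ is essentially immediate from the definitions; the content of the lemma is in $(1)\Rightarrow(2)$, and even this reduces to one short observation.

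For $(2)\Rightarrow(1)$: suppose $M$ is a Rickart module. For any $\varphi\in S=\mathrm{End}_R(M)$ we have $\mathrm{Ker}\varphi=eM$ for some $e^2=e\in S$, and a direct summand is in particular essential in a direct summand of $M$, so $M$ is CS-Rickart. For $\mathcal{K}$-nonsingularity, let $N$ be an essential submodule of $M$ and $\varphi\in S$ with $\varphi(N)=0$; then $N\subseteq\mathrm{Ker}\varphi=eM$, so $eM$ is at once essential in $M$ and a direct summand of $M$, which forces $(1-e)M=0$, hence $e=1$ and $\varphi=0$.

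For $(1)\Rightarrow(2)$: assume $M$ is $\mathcal{K}$-nonsingular and CS-Rickart, and fix $\varphi\in S$. Choose $e^2=e\in S$ with $\mathrm{Ker}\varphi\unlhd eM$; then in particular $\mathrm{Ker}\varphi\subseteq eM$, and it remains only to prove $eM\subseteq\mathrm{Ker}\varphi$, for then $\mathrm{Ker}\varphi=eM$ is a direct summand and $M$ is Rickart. The key step is to apply the $\mathcal{K}$-nonsingularity hypothesis to the composite $\psi=\varphi e\in S$. A direct computation gives $\mathrm{Ker}\psi=(1-e)M\oplus\mathrm{Ker}\varphi$: writing $m=em+(1-e)m$ we have $\psi(m)=\varphi(em)$, so $m\in\mathrm{Ker}\psi$ if and only if $em\in\mathrm{Ker}\varphi$, and since $\mathrm{Ker}\varphi\subseteq eM$ this just says the $eM$-component of $m$ lies in $\mathrm{Ker}\varphi$ while the $(1-e)M$-component is unrestricted. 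Because $\mathrm{Ker}\varphi\unlhd eM$, it follows that $\mathrm{Ker}\psi\unlhd(1-e)M\oplus eM=M$. Now $\psi$ vanishes on the essential submodule $\mathrm{Ker}\psi$, so $\mathcal{K}$-nonsingularity yields $\psi=0$, i.e.\ $\varphi e=0$, i.e.\ $eM\subseteq\mathrm{Ker}\varphi$, as required.

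I do not expect a genuine obstacle: the only point needing care is keeping the composition order straight and verifying the identity $\mathrm{Ker}(\varphi e)=(1-e)M\oplus\mathrm{Ker}\varphi$, after which both directions collapse to a single line.
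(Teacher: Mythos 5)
Your proof is correct, and since the paper dismisses this lemma with ``can be verified directly,'' your argument is exactly the direct verification the authors have in mind: the key identity $\mathrm{Ker}(\varphi e)=\mathrm{Ker}\varphi\oplus(1-e)M$ together with the fact that a direct sum of essential submodules is essential in $eM\oplus(1-e)M=M$ is the standard way to upgrade $\mathrm{Ker}\varphi\unlhd eM$ to $\mathrm{Ker}\varphi=eM$ under $\mathcal{K}$-nonsingularity. Both directions check out, including the observation in $(2)\Rightarrow(1)$ that an essential direct summand must be all of $M$.
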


\begin{lemma}   For a right $R$-module $M$, the following conditions are
equivalent:
\begin{itemize}
\item[(1)]$M$ is a $\mathcal{T}$-noncosingular d-CS-Rickart module.
\item[(2)]$M$ is a d-Rickart module.
\end{itemize}
\end{lemma}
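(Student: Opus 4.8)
The plan is to prove the two implications separately; the forward direction $(2)\Rightarrow(1)$ is essentially formal, and the reverse direction $(1)\Rightarrow(2)$ uses the decomposition coming from "lying above a direct summand" together with a single application of $\mathcal{T}$-noncosingularity. This is the exact dual of the proof of Lemma \ref{3}.

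For $(2)\Rightarrow(1)$: assume $M$ is d-Rickart and let $\varphi\in S=\mathrm{End}_R(M)$. Write $\mathrm{Im}\varphi=eM$ with $e=e^2\in S$; then the decomposition $M=eM\oplus(1-e)M$ exhibits $\mathrm{Im}\varphi$ as lying above the direct summand $eM$, since its complement $(1-e)M$ meets $\mathrm{Im}\varphi$ in $0$. Hence $M$ is d-CS-Rickart. For $\mathcal{T}$-noncosingularity, recall that a direct summand of $M$ which is small in $M$ must be zero; so if $\varphi\neq 0$ then $\mathrm{Im}\varphi=eM\neq 0$ cannot be small in $M$, and $M$ is $\mathcal{T}$-noncosingular.

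For $(1)\Rightarrow(2)$: let $\varphi\in S$. Since $M$ is d-CS-Rickart, there is a decomposition $M=N_1\oplus N_2$ with $N_1\subseteq\mathrm{Im}\varphi$ and $N_2\cap\mathrm{Im}\varphi$ small in $N_2$, hence small in $M$ (a small submodule of a direct summand is small in the whole module). By the modular law, $\mathrm{Im}\varphi=N_1\oplus(N_2\cap\mathrm{Im}\varphi)$. Let $\pi\colon M\to N_2$ be the projection along $N_1$. Then $\mathrm{Im}(\pi\varphi)=\pi(\mathrm{Im}\varphi)=\pi\bigl(N_1\oplus(N_2\cap\mathrm{Im}\varphi)\bigr)=N_2\cap\mathrm{Im}\varphi$, which is small in $M$. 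Now $\mathcal{T}$-noncosingularity forces $\pi\varphi=0$, i.e.\ $\mathrm{Im}\varphi\subseteq N_1$. Combined with $N_1\subseteq\mathrm{Im}\varphi$ this gives $\mathrm{Im}\varphi=N_1$, a direct summand of $M$, so $M$ is d-Rickart.

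The only point that needs slight care is the choice of the endomorphism to which $\mathcal{T}$-noncosingularity is applied, namely $\pi\varphi$, where $\pi$ annihilates the "good part'' $N_1$ and isolates the small leftover $N_2\cap\mathrm{Im}\varphi$; once one observes that $\mathrm{Im}(\pi\varphi)$ is exactly this small piece, the argument closes at once. I do not anticipate any deeper obstacle.
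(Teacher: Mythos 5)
Your proof is correct. The paper gives no argument for this lemma (it is stated as one that ``can be verified directly''), and your two implications are exactly the standard direct verification: the forward direction via the trivial decomposition $M=eM\oplus(1-e)M$ and the observation that a nonzero direct summand is never small, and the reverse direction via the modular-law splitting $\mathrm{Im}\varphi=N_1\oplus(N_2\cap\mathrm{Im}\varphi)$ and applying $\mathcal{T}$-noncosingularity to $\pi\varphi$.
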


It is known
that every Rickart module has the SIP and every d-Rickart module has the SSP. Our next two propositions show that every CS-Rickart module has the SIP-CS and every d-CS-Rickart module has the SSP-d-CS.
\begin{proposition}\label{4}
Let $M$ be a CS-Rickart module and $S= \mathrm{End}_R(M)$. Then the following statements hold:

\begin{itemize}
\item[(1)] If $A = eM $, $B = fM$ for some $e^2 = e\in S$ and $f^2 = f\in S$, then there exists
$g^2 = g \in S$ such that $eM \cap fM \unlhd gM $.

\item[(2)] If $A \unlhd eM $, $B \unlhd fM$ for some $e^2 = e\in S$ and $f^2 = f\in S$, then
there exists $g^ 2 = g \in S $ such that $ A \cap B \unlhd gM$.

\item[(3)] For arbitrary $\varphi_1, ..., \varphi_n \in S$, there exists an idempotent $e \in S$ such that  $\bigcap_{1\leqslant i \leqslant n}\mathrm{Ker}\varphi_i \unlhd eM$.

\item[(4)] $M$ is an SIP-CS module .
\end{itemize}
\end{proposition}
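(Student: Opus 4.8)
The four parts build on each other, so I'd prove them in order. For part (1): given $A = eM$ and $B = fM$, consider the endomorphism $\varphi = (1-f)e \in S$ (or equivalently view $e$ as a map restricted appropriately). We have $x \in \mathrm{Ker}\,\varphi$ iff $ex \in fM$... actually more carefully: for $m \in M$, $(1-f)e(m) = 0$ iff $e(m) = f e(m) \in fM$, so $\mathrm{Ker}((1-f)e) = \{m : em \in fM\} \supseteq eM \cap fM$, but that's not quite the intersection. The cleaner choice: let $\psi = e - efe$, no — let me use the standard trick. Set $\varphi = (1-e)f$. Then for $x \in fM$, write $x = fx$; $\varphi(x) = (1-e)fx = (1-e)x$, which is $0$ iff $x = ex \in eM$. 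So $\mathrm{Ker}\,\varphi \cap fM = eM \cap fM$, and since $\mathrm{Ker}\,\varphi \supseteq (1-f)M$ always, we get $\mathrm{Ker}\,\varphi = (eM\cap fM) \oplus (1-f)M$ — hmm, need to check this is a genuine direct sum decomposition of the kernel. Actually $\mathrm{Ker}\,\varphi = ((1-e)f)^{-1}(0)$; since $M = fM \oplus (1-f)M$ and $\varphi$ kills $(1-f)M$, indeed $\mathrm{Ker}\,\varphi = (\mathrm{Ker}\,\varphi \cap fM) \oplus (1-f)M = (eM \cap fM)\oplus(1-f)M$. By the CS-Rickart hypothesis, $\mathrm{Ker}\,\varphi \unlhd gM$ for some idempotent $g$. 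I then want to descend essentiality to $eM \cap fM$ inside a summand. The subtlety: I need $eM \cap fM$ essential in a direct summand, not just inside $gM$ which contains the extra $(1-f)M$ piece. The key step will be to show that $gM$ decomposes compatibly; one approach is to note $eM \cap fM \subseteq eM$, intersect $gM$ with $eM$ appropriately, or apply the hypothesis instead to an endomorphism whose kernel is exactly $eM \cap fM$. A better choice: take $\varphi = (1-e)f + (1-f)$, i.e.\ $\varphi = 1 - ef$; then $\varphi(x) = 0$ iff $x = efx$, which forces $x \in eM$, and then $fx$... this still isn't clean. I'll settle on: the endomorphism $\varphi$ with $\mathrm{Ker}\,\varphi = eM \cap fM$ can be obtained because $eM \cap fM = \mathrm{Ker}((1-f)|_{eM}) = \mathrm{Ker}((1-f)e) \cap eM$; and since $eM$ is itself CS-Rickart by Lemma \ref{1}, applying the CS-Rickart property of $eM$ to the restriction of $(1-f)e$ gives $eM \cap fM \unlhd hM_0$ for an idempotent $h$ of $\mathrm{End}(eM)$, and a summand of a summand is a summand of $M$. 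That is the clean route: \emph{use Lemma \ref{1} to pass to $eM$}.

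For part (2): suppose $A \unlhd eM$ and $B \unlhd fM$. By part (1) there is an idempotent $g$ with $eM \cap fM \unlhd gM$. I claim $A \cap B \unlhd eM \cap fM$. Indeed if $0 \neq x \in eM \cap fM$, then since $A \unlhd eM$ there is $r$ with $0 \neq xr \in A$; since $xr \in fM$ and $B \unlhd fM$ there is $s$ with $0 \neq xrs \in B$; and $xrs \in A$ as $A$ is a submodule, so $0 \neq xrs \in A \cap B$. Hence $A \cap B \unlhd eM \cap fM \unlhd gM$, and essentiality is transitive, giving $A \cap B \unlhd gM$.

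For part (3): straightforward induction on $n$ using part (2). The base case $n = 1$ is the definition of CS-Rickart. For the inductive step, $\bigcap_{i \leqslant n}\mathrm{Ker}\,\varphi_i = \left(\bigcap_{i\leqslant n-1}\mathrm{Ker}\,\varphi_i\right) \cap \mathrm{Ker}\,\varphi_n$; by induction the first factor is essential in $e'M$ for some idempotent $e'$, and $\mathrm{Ker}\,\varphi_n$ is essential in $e''M$ (taking $A = \mathrm{Ker}\,\varphi_n$, which is essential in the summand guaranteed by the hypothesis), so part (2) applies. For part (4): let $A, B$ be any two direct summands of $M$, say $A = eM$, $B = fM$ with $e, f$ idempotent; this is exactly the hypothesis of part (1), which hands us an idempotent $g$ with $A \cap B \unlhd gM$, i.e.\ the intersection is essential in a direct summand — precisely the SIP-CS condition.

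The main obstacle I anticipate is part (1) — specifically, producing a direct summand in which $eM \cap fM$ (and not some larger module) is essential. The naive endomorphism $(1-e)f$ has a kernel strictly larger than $eM \cap fM$, so one must either (i) restrict attention to the CS-Rickart summand $eM$ and apply Lemma \ref{1}, intersecting there, or (ii) verify that the extra direct summand $(1-f)M$ in the kernel splits off compatibly with the essential extension, so that the $gM$ produced by the hypothesis can be refined. Route (i) is cleaner and I would take it; once part (1) is in hand, parts (2)--(4) are essentially formal consequences of the transitivity of essentiality and a short induction.
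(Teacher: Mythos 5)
Your parts (2)--(4) are correct and follow the same reduction as the paper (which likewise derives (2) and (4) from (1), and (3) from (2)); your transitivity-of-essentiality argument for (2) is fine. The problem is part (1), and specifically the route you chose to resolve the difficulty you correctly identified. Lemma \ref{1} tells you that $eM$ is a CS-Rickart \emph{module}, i.e.\ that kernels of \emph{endomorphisms} of $eM$ are essential in direct summands of $eM$. But the map you want to feed it, $(1-f)|_{eM}$, is a homomorphism from $eM$ to $M$ whose image $(1-f)eM$ need not lie in $eM$; it is not an endomorphism of $eM$, so the CS-Rickart property of $eM$ says nothing about its kernel. Nor can route (i) be repaired by a better choice of map: composing with the projection gives $e(1-f)|_{eM}$, whose kernel $\{x\in eM : x=efx\}$ is in general strictly larger than $eM\cap fM$, and in general there is no endomorphism of $eM$ with kernel exactly $eM\cap fM$. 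What route (i) actually needs is the \emph{relative} statement that $eM$ is relatively CS-Rickart to $M$, and proving that is precisely your route (ii) --- so the difficulty you hoped to avoid reappears unchanged.

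The paper takes route (ii), and it is short. With $\varphi=(1-f)e\in S$ one has $\mathrm{Ker}\,\varphi=(eM\cap fM)\oplus(1-e)M$; the CS-Rickart hypothesis gives $h^2=h\in S$ with $\mathrm{Ker}\,\varphi\unlhd hM$; since $(1-e)M\subseteq\mathrm{Ker}\,\varphi\subseteq hM$, the modular law gives $hM=hM\cap\bigl(eM\oplus(1-e)M\bigr)=(hM\cap eM)\oplus(1-e)M$, so cancelling the common summand $(1-e)M$ from the essential extension $(eM\cap fM)\oplus(1-e)M\unlhd(hM\cap eM)\oplus(1-e)M$ yields $eM\cap fM\unlhd hM\cap eM$; finally $hM\cap eM$ is a direct summand of $hM$ and hence of $M$. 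The ``compatible splitting'' you were worried about in route (ii) is thus automatic from the modular law, and this is the step your write-up is missing.
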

\begin{proof}
(1) Since $(1-e) M \subset \mathrm{Ker}(1-f)e$, this implies that
$\mathrm{Ker} (1-f)e = eM \cap \mathrm{Ker}(1-f)e \oplus
(1-e)M = eM \cap \mathrm{Ker}(1-f) \oplus (1-e)M $. There exists $h^2 = h \in S $ such that
$\mathrm{Ker}(1-f) e \unlhd hM$ due to $M$ is a
CS-Rickart module. Since $(1-e) M \subset hM$, it follows that
$hM = hM \cap (eM \oplus (1-e) M) = hM \cap eM \oplus (1-e)M$. Thus
$eM \cap \mathrm{Ker}(1-f) \oplus (1-e) M \unlhd hM \cap eM \oplus (1-e) M$
and so $eM \cap fM = eM \cap \mathrm{Ker}(1-f) \unlhd hM \cap eM $.
Since $eM \cap hM $ is a direct summand of $hM$, $eM \cap hM$ is a direct
summand of $ M $.

(2) and (4) follow from (1) while  (3) follows from (2).
\end{proof}

Proposition 2.2 of [\ref{WJ}] gives a proposition of right ACS and left $C_3$
rings. It follows from the previous proposition that we can remove the assumption $R$ is a left $C_3$ ring.
\begin{corollary}
Let $R$ be a right ACS. Then for any
$x_1,\ldots,x_n$ of $R$, there exists an idempotent $e\in R$ such that $r_R(x_1,\ldots,x_n)\unlhd eR$.
\end{corollary}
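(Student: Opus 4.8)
The plan is to reduce the corollary to Proposition~\ref{4}(3) by viewing $R_R$ as a module over itself. First I would set $M = R_R$ and $S = \mathrm{End}_R(R_R)$, and recall the standard identification $S \cong R$ acting by left multiplication: for $a \in R$, let $\lambda_a \in S$ be the endomorphism $\lambda_a(x) = ax$. Under this identification idempotents of $S$ correspond to idempotents of $R$, since $\lambda_e^2 = \lambda_e$ iff $e^2 = e$, and $\lambda_e(R) = eR$.

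Next I would translate kernels into right annihilators: for $x \in R$, $\mathrm{Ker}\,\lambda_x = \{r \in R : xr = 0\} = r_R(x)$. More generally, for $x_1, \dots, x_n \in R$, we have $\bigcap_{1 \le i \le n} \mathrm{Ker}\,\lambda_{x_i} = \bigcap_{i} r_R(x_i) = r_R(x_1, \dots, x_n)$. So once we know $R_R$ is a CS-Rickart module, Proposition~\ref{4}(3) applied to the endomorphisms $\lambda_{x_1}, \dots, \lambda_{x_n}$ yields an idempotent $\lambda_e \in S$ — equivalently an idempotent $e \in R$ — with $r_R(x_1, \dots, x_n) = \bigcap_i \mathrm{Ker}\,\lambda_{x_i} \unlhd \lambda_e(R) = eR$, which is exactly the claim.

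The only remaining point is to check that the hypothesis ``$R$ is a right ACS ring'' says precisely that $R_R$ is a CS-Rickart module. Indeed, by the definition of CS-Rickart module, $R_R$ is CS-Rickart iff for every $\varphi \in S$ the submodule $\mathrm{Ker}\,\varphi$ is essential in a direct summand of $R_R$; via the identification above this is equivalent to saying that for every $x \in R$, $r_R(x) = \mathrm{Ker}\,\lambda_x$ is essential in a direct summand of $R_R$, i.e. essential in some $eR$ with $e^2 = e$ — and this is exactly the definition of a right ACS ring recalled in the introduction.

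There is essentially no obstacle here; the corollary is a direct specialization of Proposition~\ref{4}(3). The only thing requiring a moment's care is bookkeeping the anti-/iso-morphism between $R$ and $\mathrm{End}_R(R_R)$ and making sure the correspondence of idempotents and of kernels with annihilators is stated in the direction needed, but this is entirely routine. The point of the corollary is precisely that, in contrast to Proposition~2.2 of [\ref{WJ}], no $C_3$ (or $C_2$) hypothesis on $R$ is needed, because Proposition~\ref{4} already delivers the essential-in-a-summand conclusion for finite intersections of kernels using only the CS-Rickart property.
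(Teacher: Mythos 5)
Your proposal is correct and follows exactly the route the paper intends: the paper gives no separate argument for this corollary, stating only that it ``follows from the previous proposition,'' i.e.\ from Proposition~\ref{4}(3) applied to $M=R_R$ with the standard identification $\mathrm{End}_R(R_R)\cong R$ and $\mathrm{Ker}\,\lambda_x = r_R(x)$, which is precisely what you spelled out. Your extra care with the idempotent correspondence and the observation that no $C_3$ hypothesis is needed are accurate and match the paper's stated motivation.
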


We recall that a ring $R$ is called a ring with the \emph{minimum condition on
right annihilators} if $R$ does not contain an infinite descending chain of right ideals that are right annihilators of subsets of $R$. A ring $R$ is called a \emph{right essentially Baer ring} if the right annihilator of any nonempty subset of $R$ is essential in a direct summand (see [\ref{Bi}, Definition 8.1.1]). It follows from the previous proposition that, for a ring with the minimum condition on
right annihilators, the right ACS and the
right essentially Baer properties coincide.

\begin{corollary}\label{4b}
Let $R$ be a ring with the minimum condition on right annihilators. Then the following conditions are equivalent:
\begin{itemize}
 \item [(1)] $R$ is a right ACS ring.

 \item [(2)] $R$ is a right essentially Baer ring.
 \end{itemize}
\end{corollary}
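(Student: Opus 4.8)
The plan is to prove $(2)\Rightarrow(1)$ trivially and to obtain $(1)\Rightarrow(2)$ by reducing an arbitrary annihilator to a finite one via the minimum condition and then quoting the Corollary just proved.

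For $(2)\Rightarrow(1)$ there is nothing to do: for $x\in R$ the right ideal $r_R(x)=r_R(\{x\})$ is the right annihilator of a nonempty subset of $R$, hence essential in a direct summand of $R_R$ by the definition of a right essentially Baer ring, so $R$ is right ACS.

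For $(1)\Rightarrow(2)$, fix a nonempty subset $X\subseteq R$ and write $r_R(X)=\bigcap_{x\in X}r_R(x)$. First I would pass to a finite subintersection. Consider the family $\mathcal F=\{\,r_R(F)\mid F\subseteq X,\ F\text{ finite}\,\}$; every member of $\mathcal F$ is the right annihilator of a subset of $R$, so by the minimum condition on right annihilators $\mathcal F$ has a minimal element $r_R(F_0)$, say $F_0=\{x_1,\dots,x_n\}$. For any $x\in X$ we have $r_R(F_0\cup\{x\})\subseteq r_R(F_0)$ with $r_R(F_0\cup\{x\})\in\mathcal F$, so minimality forces $r_R(F_0\cup\{x\})=r_R(F_0)$, i.e.\ $r_R(F_0)\subseteq r_R(x)$. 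Hence $r_R(F_0)\subseteq\bigcap_{x\in X}r_R(x)=r_R(X)$, and as the reverse inclusion is clear we get $r_R(X)=r_R(x_1,\dots,x_n)$. Now I would apply the Corollary preceding this statement (a consequence of Proposition~\ref{4}(3) applied to $M=R_R$, whose endomorphisms are the left multiplications): since $R$ is right ACS there is an idempotent $e\in R$ with $r_R(x_1,\dots,x_n)\unlhd eR$. Therefore $r_R(X)\unlhd eR$, so $R$ is right essentially Baer.

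The argument is essentially routine, and the only step that really uses the hypothesis is the reduction from the possibly infinite intersection $\bigcap_{x\in X}r_R(x)$ to a finite one, which is exactly what the minimum condition on right annihilators provides; everything after that is a direct citation of the already-established Corollary. So I do not anticipate a genuine obstacle beyond writing that reduction carefully.
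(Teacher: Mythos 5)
Your proof is correct and follows the route the paper intends: the paper states this corollary as an immediate consequence of Proposition~\ref{4} (via the preceding corollary on $r_R(x_1,\dots,x_n)$), and your reduction of an arbitrary annihilator $r_R(X)$ to a finite one $r_R(F_0)$ via a minimal element of the family of finite subintersections is exactly the standard argument being invoked. No issues.
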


\begin{proposition}\label{6}
Let $M$ be a d-CS-Rickart module and and $S= \mathrm{End}_R(M)$. Then the following statements hold:

\begin{itemize}
 \item[(1)] If $A = eM$, $B = fM$ for some $e^2 = e\in S$ and $f^2 = f\in S$, then there exists
$g^2 = g \in S$ such that $A + B$ lies above $gM$.

 \item[(2)] If $A$ lies above $eM$, $B$ lies above $fM$ for some $e^2 = e\in S$ and
$f^2 = f\in S$, then there exists $g^2 = g \in S$ such that $A + B$ lies above $gM$.

 \item[(3)] For arbitrary homomorphisms $\varphi_1, ..., \varphi_n \in S$, there exists an
idempotent $e\in S$ such that $\sum_{i = 1}^n \mathrm{Im}\varphi_i$
lies above $eM$.

 \item[(4)] $M$ is an SSP-d-CS module.
 \end{itemize}

\end{proposition}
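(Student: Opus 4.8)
The proof runs dual to that of Proposition~\ref{4}, with part~(1) carrying all the content; (2), (3) and (4) then follow by formal manipulations. For (1), I would first record the decomposition $eM+fM=eM\oplus(1-e)fM$: the inclusion $eM+fM\subseteq eM+(1-e)fM$ comes from $fm=efm+(1-e)fm$, the reverse inclusion is clear, and the sum is direct since $(1-e)fM\subseteq(1-e)M$ meets $eM$ trivially. Since $(1-e)f\in S$ and $M$ is d-CS-Rickart, $\mathrm{Im}\,(1-e)f=(1-e)fM$ lies above a direct summand, say $M=M_1\oplus M_2$ with $M_1\subseteq(1-e)fM$ and $M_2\cap(1-e)fM$ small in $M_2$. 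The crucial point is that $M_1\subseteq(1-e)fM\subseteq(1-e)M$, so the modular law gives $(1-e)M=M_1\oplus M_2'$ with $M_2':=M_2\cap(1-e)M$, hence $M=eM\oplus M_1\oplus M_2'$; thus $gM:=eM\oplus M_1$ is a direct summand of $M$, so the required idempotent $g\in S$ exists, and $gM\subseteq eM+(1-e)fM=eM+fM$. It then remains to check that $(eM+fM)\cap M_2'$ is small in $M_2'$: using $M_2'\subseteq(1-e)M$ and $eM\cap(1-e)M=0$ one identifies $(eM\oplus(1-e)fM)\cap M_2'=(1-e)fM\cap M_2'$, which sits inside $(1-e)fM\cap M_2$; the latter is small in $M_2$, hence in $M$, so $(1-e)fM\cap M_2'$ is a submodule of $M$ small in $M$ and contained in the direct summand $M_2'$, and is therefore small in $M_2'$. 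This shows $eM+fM$ lies above $gM$.

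For (2), writing out the hypotheses gives $A=eM\oplus A'$ and $B=fM\oplus B'$ with $A'=A\cap(1-e)M$ small in $(1-e)M$ and $B'=B\cap(1-f)M$ small in $(1-f)M$ (modular law); since $(1-e)M$ and $(1-f)M$ are direct summands, $A'$ and $B'$ are small in $M$, whence $A+B=(eM+fM)+S$ with $S:=A'+B'$ small in $M$. By (1), $eM+fM=gM\oplus U$ with $U=(eM+fM)\cap(1-g)M$ small in $(1-g)M$, hence in $M$; so $A+B=gM+(U+S)$ with $U+S$ small in $M$ and $gM\subseteq A+B$. A short computation, splitting an element of $(A+B)\cap(1-g)M$ along $M=gM\oplus(1-g)M$, yields $(A+B)\cap(1-g)M=(1-g)(U+S)$, which is small in $(1-g)M$ as the image of a small submodule under the projection $M\to(1-g)M$. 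Thus $A+B$ lies above $gM$. Part~(3) is then an induction on $n$: $\sum_{i=1}^{n}\mathrm{Im}\,\varphi_i=\big(\sum_{i=1}^{n-1}\mathrm{Im}\,\varphi_i\big)+\mathrm{Im}\,\varphi_n$, where the first summand lies above some $e'M$ by the inductive hypothesis and $\mathrm{Im}\,\varphi_n$ lies above some $e_nM$ because $M$ is d-CS-Rickart, so (2) applies; the case $n=1$ is the definition. Finally (4) is immediate from (1), since every direct summand of $M$ is of the form $eM$ for an idempotent $e\in S$.

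The only genuine obstacle I anticipate is inside (1): proving simultaneously that $gM=eM\oplus M_1$ is a direct summand of $M$ and that the remainder $(eM+fM)\cap M_2'$ is small \emph{in} $M_2'$ — not merely in $M$ or in $M_2$. This is exactly where the modular-law bookkeeping with the idempotents $e$ and $g$, together with the transfer of smallness among $M_2$, $M_2'$ and $M$, has to be done with care. Once (1) is established, parts (2)--(4) are the routine arguments that lying above a direct summand is insensitive to small perturbations and to shrinking complements, dual to the corresponding steps in the proof of Proposition~\ref{4}.
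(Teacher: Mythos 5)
Your argument is correct and follows essentially the same route as the paper: both hinge on the decomposition $eM+fM=eM\oplus(1-e)fM$ and on applying the d-CS-Rickart hypothesis to the endomorphism $(1-e)f$ to produce the direct summand $eM\oplus M_1$ above which $eM+fM$ lies. The only difference is one of detail: the paper dispatches the final ``lies above'' verification by citing [\ref{JC}, 22.1] and leaves (2)--(4) as formal consequences of (1), whereas you check the definition and the reductions explicitly.
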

\begin{proof}
(1) Since $M$ is a d-CS-Rickart module, there exists
$h^2 = h \in S$ such that $hM \subset (1-e)fM $ and $(1-h)(1-e)fM \ll M$.
Since $ eM + fM = eM \oplus (1-e) fM$, it follows that $eM + fM = eM \oplus hM \oplus
(1-h)(1-e) fM $. Therefore,  according to [\ref{JC}, 22.1], the module $eM + fM$ lies above
the direct summand $eM \oplus hM$ of $M$.

(2) and (4) follow from (1), and (3) follows from (2).
\end{proof}

We give connections between CS-Rickart modules and CS modules; d-CS-Rickart modules and d-CS modules.
\begin{corollary} The following implications hold:
\begin{itemize}
\item[(1)] If $M$ is a CS-Rickart module and every submodule of $M_R$ is a
right annihilator in $M$ of some finitely generated left ideal of $S=\mathrm{End}_RM$,
then $M$ is a CS module.

\item[(2)] If $M$ is a d-CS-Rickart module and every submodule of $M_R$ is a sum of finitely many homomorphic images of $S=\mathrm{End}_RM$ then $M$ is a d-CS module.
\end{itemize}
\end{corollary}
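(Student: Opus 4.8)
The plan is to reduce both parts directly to the structural results already obtained, namely Proposition~\ref{4}(3) for part~(1) and Proposition~\ref{6}(3) for part~(2); almost all of the work is a translation between the annihilator/image formulation of the hypotheses and the kernel/image formulation of those propositions.

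For~(1), I would take an arbitrary submodule $N$ of $M$. By hypothesis there is a finitely generated left ideal $L = S\varphi_1 + \cdots + S\varphi_n$ of $S$ with $N = r_M(L) = \{m \in M : Lm = 0\}$. The first step is to verify the identity $r_M(L) = \bigcap_{i=1}^{n} \mathrm{Ker}\,\varphi_i$: the inclusion ``$\subseteq$'' is immediate since each $\varphi_i \in L$, and for ``$\supseteq$'' one observes that if $\varphi_i m = 0$ for all $i$, then $(s\varphi_i)m = s(\varphi_i m) = 0$ for every $s \in S$, so $Lm = 0$. Then I would apply Proposition~\ref{4}(3) to $\varphi_1, \dots, \varphi_n$ to obtain an idempotent $e \in S$ with $\bigcap_{i=1}^{n}\mathrm{Ker}\,\varphi_i \unlhd eM$; hence $N \unlhd eM$ with $eM$ a direct summand of $M$. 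Since $N$ was arbitrary, $M$ is a CS module.

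For~(2), I would again take an arbitrary submodule $N$ of $M$. By hypothesis $N = \varphi_1(M) + \cdots + \varphi_n(M) = \sum_{i=1}^{n}\mathrm{Im}\,\varphi_i$ for suitable $\varphi_1, \dots, \varphi_n \in S$. Applying Proposition~\ref{6}(3) to these endomorphisms yields an idempotent $e \in S$ such that $\sum_{i=1}^{n}\mathrm{Im}\,\varphi_i$ lies above the direct summand $eM$ of $M$, i.e.\ $N$ lies above a direct summand of $M$. As $N$ was arbitrary, $M$ is a d-CS module.

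The only point needing any care is the annihilator computation in~(1), together with the symmetric observation that the ``finitely many homomorphic images'' appearing in the hypothesis of~(2) are precisely the submodules $\varphi(M)$ with $\varphi \in S$, so that Proposition~\ref{6}(3) applies verbatim. Once these identifications are made, there is no remaining obstacle: no fresh essentiality or smallness estimate is required here, since all of that is already packaged inside Propositions~\ref{4} and~\ref{6}.
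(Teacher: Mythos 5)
Your proposal is correct and follows exactly the paper's own route: part (1) reduces to Proposition \ref{4}(3) via the identification $r_M(S\varphi_1+\cdots+S\varphi_n)=\bigcap_i\mathrm{Ker}\,\varphi_i$, and part (2) reduces to Proposition \ref{6}(3) by writing $N=\sum_i\varphi_i(M)$. The only difference is that you spell out the annihilator computation explicitly, which the paper leaves implicit (and you correctly read the ``homomorphic images'' hypothesis in (2) as $N=\sum_i\mathrm{Im}\,\varphi_i$, matching the paper's intent).
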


\begin{proof}
(1) If $N\leq M$ then $N=r_M(\varphi_1, ..., \varphi_n)$ for some $\varphi_1, ..., \varphi_n \in S$. Since $M$ is a CS-Rickart module, by Proposition \ref{4}, there exists an idempotent $e \in S$ such that $r_M(\varphi_1, ..., \varphi_n)\unlhd eM$.

(2) If $N\leq M$ then $N=\sum_{i=1}^n \varphi_i N$ for some $\varphi_1, ..., \varphi_n \in S$. Since $M$ is a d-CS-Rickart module, by Proposition \ref{6}, there exists an idempotent $e \in S$ such that  $\sum_{i=1}^n \varphi_i N$ lies above $eM$.
\end{proof}

\section{CS-Rickart modules and $C_2$ condition}

Let $M_R$ be a module. Every module which is isomorphic to
a submodule of some homomorphic image of a direct sum of copies of $M$ is called
an \textit{$M$-subgenerated module}. The full subcategory of all right $R$-modules consists of all
$M$-subgenerated modules is denoted by $\sigma(M)$. It is called \textit{the Wisbauer category of
the module $M$}.

Let $M$ and $N$ be right $R$-modules. We then define $Z_M(N)$ as \textit{the largest $M$-singular submodule of $N$}, it means that $$Z_M(N) =\sum_{f\in \mathrm{Hom}_R(X, N), \mathrm{Ker} f \unlhd X, X \in\sigma(M)} f (X).$$

To prove the main theorem of this section, we give two useful lemmas.

\begin{lemma}\label{7}
The following statements hold:
\begin{itemize}
\item[(1)] If $M$ is a right $R$-module and $P$ is a projective
module in the category $\sigma (M)$, then, for any submodule $N$ of $P$, the following conditions are equivalent:

\begin{itemize}
\item[(a)] $N$ is an essential submodule of $P$;
\item[(b)] $P/N $ is an $M$-singular module.
\end{itemize}

\item[(2)] If $M$ is a right $R$-module, then every nonzero projective
module in the category $\sigma (M)$ is not an $M$-singular.

\item[(3)] If $R$-module $P$ is a nonzero finitely generated quasi-projective module, then $Z_{P}(P) \neq P$.

\end{itemize}
\end{lemma}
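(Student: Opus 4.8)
The plan is to handle the three parts in order, since (3) will lean on (1) and (2). For part (1), the implication (a)$\Rightarrow$(b): if $N \unlhd P$, then for the quotient map $\pi\colon P\to P/N$ we have $\operatorname{Ker}\pi = N \unlhd P$, and since $P \in \sigma(M)$ with $P$ playing the role of $X$ in the definition of $Z_M(-)$, we get $\pi(P) = P/N \subseteq Z_M(P/N)$, i.e. $P/N$ is $M$-singular. For (b)$\Rightarrow$(a): suppose $P/N$ is $M$-singular but $N$ is not essential in $P$; pick $0\neq L \leq P$ with $L\cap N = 0$. Then $L$ embeds in $P/N$, so $L$ is $M$-singular (being a submodule of an $M$-singular module — this closure property should be noted, or cited from Wisbauer). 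But $L$ is also a submodule of the projective module $P$; I would like to derive a contradiction. The cleanest route: a nonzero $M$-singular module cannot be projective in $\sigma(M)$ — but that is exactly part (2), and more: I need that $L$ itself, not necessarily projective, cannot be simultaneously $M$-singular and "essentially complemented" inside a projective. Actually the right statement is that if $P/N$ is $M$-singular then for any $L\leq P$ with $L \cap N =0$, the composite $L \hookrightarrow P \to P/N$ is a monomorphism exhibiting $L$ as $M$-singular; combined with $L$ being a submodule of $P$, one uses that $Z_M(P) = 0$ for $P$ projective in $\sigma(M)$. So I would prove $Z_M(P)=0$ first as the core fact.

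For part (2), I would argue that $Z_M(P) = 0$ whenever $P$ is projective in $\sigma(M)$: if not, there is $X \in \sigma(M)$ and $f\colon X \to P$ with $\operatorname{Ker} f \unlhd X$ and $f(X)\neq 0$. Replacing $X$ by a suitable submodule I may assume $f$ is onto a nonzero submodule of $P$; but I actually want $f$ onto a direct summand. Here is the standard trick: since $P$ is projective in $\sigma(M)$ and $f(X)$ — hmm, $f(X)$ need not be a summand. Instead: consider the nonzero map $f$; since $P$ is projective there is a subtlety. The correct classical argument (as in Wisbauer, "Foundations of Module Theory") is: $Z_M(P)$ is a fully invariant submodule, and if $0 \neq Z_M(P)$, pick $0\neq x \in Z_M(P)$; then $xR$ is $M$-singular and is a submodule of $P$. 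Since $P$ is projective in $\sigma(M)$, so is... no. Let me instead use part (1) in the other direction: if $Z_M(P)\neq 0$, write $g\colon X\to P$ with small-essential kernel and nonzero image; the image $g(X)$ is $M$-singular, hence by the definition every finitely generated submodule of it is a homomorphic image of something with essential kernel, so by (1) applied to appropriate projective covers... The honest approach is to cite Wisbauer directly: in $\sigma(M)$, the class of $M$-singular modules is closed under submodules, quotients, and extensions, and contains no nonzero projective (Wisbauer §17, the singularity theory relative to $\sigma(M)$). So part (2) should be stated as following from the relative-singularity machinery, with a one-line reference.

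For part (3): let $P$ be a nonzero finitely generated quasi-projective $R$-module. Suppose $Z_P(P) = P$. Since $P$ is finitely generated, $P = \sum_{i=1}^k x_i R$; as $P$ is $P$-singular there exist $X_i \in \sigma(P)$ and $f_i\colon X_i \to P$ with $\operatorname{Ker} f_i \unlhd X_i$ whose images together contain all $x_i$, hence (after replacing $X := \bigoplus X_i$) a single $f\colon X \to P$ with $\operatorname{Ker} f \unlhd X$ and $f$ surjective. Now pull back along a free (or generator) cover in $\sigma(P)$: there is an epimorphism $h\colon P^{(n)} \to X$ for some $n$ — wait, $X$ need not be finitely generated, but I only need $f$ surjective with essential kernel, then compose: since $P$ is quasi-projective and $P^{(n)}\twoheadrightarrow X \xrightarrow{f} P$, actually I want a projective-in-$\sigma(P)$ object mapping onto $P$ with essential kernel. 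Take $X' \leq X$ finitely generated with $f(X') = P$ (possible since $P$ is f.g. and $f$ onto); then $\operatorname{Ker}(f|_{X'}) = X' \cap \operatorname{Ker} f \unlhd X'$ because $\operatorname{Ker} f \unlhd X$. Now $X' \in \sigma(P)$ f.g., so there is an epimorphism $P^{(m)} \to X'$ for some finite $m$; composing, $P^{(m)} \xrightarrow{\alpha} P$ is onto, and its kernel is essential in $P^{(m)}$ (preimage of the essential $\operatorname{Ker}(f|_{X'})$ under an epimorphism from a... careful: preimages of essential submodules under epimorphisms are essential only if the epimorphism has small kernel or we argue directly — actually preimage of essential is always essential under any epimorphism: if $K \unlhd X'$ and $\pi\colon P^{(m)}\to X'$, and $0 \neq L \leq P^{(m)}$, if $L \cap \pi^{-1}(K) = 0$ then $\pi|_L$ is injective and $\pi(L)\cap K = 0$ so $\pi(L) = 0$, i.e. $L \subseteq \operatorname{Ker}\pi \subseteq \pi^{-1}(K)$, forcing $L = 0$). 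Good — so $\ker\alpha \unlhd P^{(m)}$. By quasi-projectivity of $P$, the epimorphism $\alpha\colon P^{(m)}\to P$ splits, so $P$ is a direct summand of $P^{(m)}$ and $\ker\alpha$ is a direct summand too; an essential direct summand of $P^{(m)}$ must be all of $P^{(m)}$, giving $P^{(m)} = \ker\alpha$, hence $P = \alpha(P^{(m)}) = 0$, contradiction.

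The main obstacle I anticipate is part (3), specifically getting from "$P$ is $P$-singular and finitely generated" down to a finitely generated projective-in-$\sigma(P)$ (here just $P^{(m)}$, using quasi-projectivity as a substitute for projectivity) that maps onto $P$ with \emph{essential} kernel, and then using the splitting. The delicate points are: (i) that one may shrink $X$ to a finitely generated $X'$ with $f(X') = P$ while keeping the kernel essential — this uses finite generation of $P$ and essentiality passing to submodules; (ii) that essential kernels pull back to essential submodules under epimorphisms — true in general, but worth stating; (iii) that quasi-projectivity of $P$ suffices to split $P^{(m)} \twoheadrightarrow P$ — this is the standard fact that a module is quasi-projective iff every epimorphism onto it from a direct sum of copies of itself splits, or more simply that $\alpha$ splits because $P$ is $P^{(m)}$-projective. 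Parts (1) and (2) are essentially bookkeeping with the relative singular functor $Z_M$ and can cite Wisbauer; the content is all in (3).
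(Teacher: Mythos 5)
Your plan for part (1), direction (b)$\Rightarrow$(a), rests on a ``core fact'' that is false. You want to take a nonzero $L\le P$ with $L\cap N=0$, observe that $L$ embeds in the $M$-singular module $P/N$ and is therefore $M$-singular, and then conclude $L=0$ from $Z_M(P)=0$ for $P$ projective in $\sigma(M)$. But a projective module in $\sigma(M)$ can perfectly well contain nonzero $M$-singular submodules: take $R=M=P=\mathbb{Z}/4\mathbb{Z}$; then $Z_M(P)$ is the ordinary singular submodule $Z(R)=2R\neq 0$, since $\mathrm{ann}(2)=2R$ is the unique minimal (hence essential) ideal. So the contradiction you are aiming for never materializes, and you never actually prove the core fact anyway (in part (2) you end by punting to a citation). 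The paper's argument uses the projectivity in a different, and essential, way: since $P/N$ is $M$-singular there is an epimorphism $g\colon A\to P/N$ with $A\in\sigma(M)$ and $\mathrm{Ker}\,g\unlhd A$; projectivity of $P$ in $\sigma(M)$ lifts the canonical map $f\colon P\to P/N$ to some $h\colon P\to A$ with $gh=f$, and then $N=\mathrm{Ker}\,f=h^{-1}(\mathrm{Ker}\,g)$ is essential in $P$ because preimages of essential submodules under any homomorphism are essential (a fact you yourself verify in your part (3)). With (1) in hand, (2) is a one-line consequence: apply (1) with $N=0$ to see that a nonzero $M$-singular projective would have $0$ essential in it.

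Part (3) has a second genuine gap: from ``$X'\in\sigma(P)$ is finitely generated'' you infer an epimorphism $P^{(m)}\to X'$. That requires $X'$ to be $P$-generated, and membership in $\sigma(P)$ does not give this --- $\sigma(P)$ is closed under submodules, and submodules of $P$-generated modules need not be $P$-generated. The rest of your part (3) (shrinking $X$ to a finitely generated $X'$ with essential kernel, pulling essentiality back along an epimorphism, and splitting $P^{(m)}\twoheadrightarrow P$ by relative projectivity) is sound, but the missing link is exactly the nontrivial standard fact that a finitely generated quasi-projective module is projective in $\sigma(P)$ (Wisbauer, \S 18). Once you invoke that, (3) is immediate from (2) with $M=P$, which is how the paper disposes of it.
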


\begin{proof}

(1)  Let $P/N$ be an $M$-singular module and $f: P \rightarrow
P/N$ be the natural homomorphism. There exists an epimorphism $g: A \rightarrow P/N$ for some module $A \in\sigma (M)$ such that $\mathrm{Ker}g\unlhd A$. Since $P$ is a projective module in the
category $\sigma (M)$, there exists a homomorphism $h$ such that $gh = f$. Therefore, $N = h^{-1}(\mathrm{Ker}g)\unlhd P$.

(2) and (3) follow directly from (1).
\end{proof}

\begin{lemma}\label{8}
Let $M$ be a right $R$-module, $P$ be a projective
module in the category $\sigma (M)$ and $N \in \sigma (M)$. Then, for any homomorphism $\varphi \in
\mathrm{Hom}_{R}(P, N)$, the following conditions are equivalent:

\begin{itemize}
    \item[(1)] $\mathrm{Ker}(\varphi)\unlhd eP$ for some $e^2 = e \in
\mathrm{End}_{R}(P)$.

    \item[(2)] $\varphi P = P_{0} \oplus S$, where $P_{0}$ is a projective module in
the category $\sigma (M)$ and $S$ is an $M$-singular module.
\end{itemize}
\end{lemma}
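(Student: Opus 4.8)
The plan is to prove the equivalence $(1)\Leftrightarrow(2)$ by exploiting projectivity of $P$ in $\sigma(M)$ together with Lemma \ref{7}, which pins down exactly when a submodule of a projective object in $\sigma(M)$ is essential (equivalently, has $M$-singular quotient).

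For $(1)\Rightarrow(2)$: assume $\mathrm{Ker}\varphi \unlhd eP$ for an idempotent $e\in\mathrm{End}_R(P)$. Write $P=eP\oplus(1-e)P$. Then $\varphi$ kills nothing extra on $(1-e)P$ in the sense that $\mathrm{Ker}(\varphi|_{(1-e)P})=\mathrm{Ker}\varphi\cap(1-e)P=0$, so $\varphi|_{(1-e)P}$ is a monomorphism and $\varphi P=\varphi(eP)+\varphi((1-e)P)$. I would argue this sum is direct: if $\varphi(x)=\varphi(y)$ with $x\in eP$, $y\in(1-e)P$, then $x-y\in\mathrm{Ker}\varphi\subseteq eP$, forcing $y\in eP\cap(1-e)P=0$. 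Now $(1-e)P$ is a direct summand of the projective object $P$, hence itself projective in $\sigma(M)$; set $P_0=\varphi((1-e)P)\cong(1-e)P$, which is thus projective in $\sigma(M)$. It remains to see $S:=\varphi(eP)\cong eP/\mathrm{Ker}\varphi$ is $M$-singular; since $\mathrm{Ker}\varphi\unlhd eP$ and $eP$ is projective in $\sigma(M)$, Lemma \ref{7}(1) gives exactly that $eP/\mathrm{Ker}\varphi$ is $M$-singular. Hence $\varphi P=P_0\oplus S$ as required.

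For $(2)\Rightarrow(1)$: suppose $\varphi P=P_0\oplus S$ with $P_0$ projective in $\sigma(M)$ and $S$ being $M$-singular. Let $\pi:\varphi P\to P_0$ be the projection along $S$. Then $\pi\varphi:P\to P_0$ is an epimorphism onto the projective object $P_0$, so it splits: there is a submodule $Q\subseteq P$ with $P=Q\oplus\mathrm{Ker}(\pi\varphi)$ and $\pi\varphi|_Q:Q\xrightarrow{\ \sim\ }P_0$. Since $\mathrm{Ker}(\pi\varphi)$ is a direct summand, it equals $eP$ for $e^2=e\in\mathrm{End}_R(P)$. Clearly $\mathrm{Ker}\varphi\subseteq\mathrm{Ker}(\pi\varphi)=eP$, so it remains to show $\mathrm{Ker}\varphi\unlhd eP$. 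For $x\in eP=\mathrm{Ker}(\pi\varphi)$ we have $\varphi(x)\in\ker\pi=S$, so $\varphi|_{eP}:eP\to S$ with kernel $\mathrm{Ker}\varphi$, giving an embedding $eP/\mathrm{Ker}\varphi\hookrightarrow S$. Since $S$ is $M$-singular and $M$-singular modules are closed under submodules, $eP/\mathrm{Ker}\varphi$ is $M$-singular; as $eP$ is projective in $\sigma(M)$, Lemma \ref{7}(1) yields $\mathrm{Ker}\varphi\unlhd eP$.

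The main obstacle I anticipate is the directness of the sum $\varphi P=\varphi(eP)\oplus\varphi((1-e)P)$ in the forward direction and, more delicately, verifying that $M$-singularity transfers correctly — both that $eP/\mathrm{Ker}\varphi$ is $M$-singular when $\mathrm{Ker}\varphi$ is essential (this is precisely Lemma \ref{7}(1), so it is available), and that submodules of $M$-singular modules are $M$-singular in $\sigma(M)$ (standard from the definition $Z_M(-)$, since a submodule of $f(X)$ with $\mathrm{Ker}f\unlhd X$ pulls back to an essential-kernel quotient). I would also need the routine facts that direct summands of projective objects in $\sigma(M)$ are projective in $\sigma(M)$, and that epimorphisms onto such projective objects split — both of which are standard in the Wisbauer category. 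No step requires $M$ itself to be projective; everything goes through with $P$ projective in $\sigma(M)$ and $N$ merely an object of $\sigma(M)$.
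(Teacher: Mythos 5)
Your proof is correct and follows essentially the same route as the paper: for $(2)\Rightarrow(1)$ you split the epimorphism $\pi\varphi$ onto the projective object $P_0$ and apply Lemma \ref{7} to the complementary summand $\mathrm{Ker}(\pi\varphi)$, exactly as the paper does (the paper identifies $S\cong\mathrm{Ker}(\pi\varphi)/\mathrm{Ker}\varphi$ outright, while you use the embedding into $S$ plus closure of $M$-singular modules under submodules — an immaterial difference). For $(1)\Rightarrow(2)$ you correctly supply the decomposition $\varphi P=\varphi((1-e)P)\oplus\varphi(eP)$ that the paper dismisses as "verified directly."
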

\begin{proof}

$(1) \Rightarrow (2)$. This implication can be verified directly.

$(2) \Rightarrow (1)$. Let $\pi$ be the projection from $P_{0} \oplus S$
onto $P_0$. Since $P_{0}$ is a projective module in
the category $\sigma (M)$, this  suggests that $P = \mathrm{Ker} (\pi \phi) \oplus eP$ for some $e^2 = e \in
 \mathrm{End}_{R} (P)$. Then $S \cong \mathrm{Ker} (\pi \phi) / \mathrm{Ker} (\phi)$ which is deduced  from the previous lemma that $\mathrm{Ker} (\varphi) \unlhd \mathrm{Ker} (\pi \phi)$.
\end{proof}

Let $M$ be a right $R$-module. Denote by $\triangledown (M)$ the set $\{f\in \mathrm{End}_RM \mid \mathrm{Im}f\ll M\}$ and $\vartriangle (M)$ the set $\{f\in \mathrm{End}_RM \mid \mathrm{Ker}f\unlhd M\}$. Now we establish connections between the CS-Rickart
property of a module and its d-CS-Rickart property.

\begin{theorem}\label{9} Let $M$ be a right $R$-module and $P$ be a projective
module in the category $\sigma (M)$. Then the following conditions are equivalent:

\begin{itemize}
 \item [(1)] For any homomorphism $\varphi \in \mathrm{End}_R (P)$, we have
$\varphi (P) = eP \oplus P'$, where $P'$ is an $M$-singular module and $e^2 = e \in \mathrm{End}_R(P)$.
 \item [(2)] $P$ is a CS-Rickart module satisfying $C_{2}$ condition.
 \item [(3)] $P$ is a d-CS-Rickart module satisfying $\vartriangle(P) = \triangledown(P)$.
\end{itemize}
\end{theorem}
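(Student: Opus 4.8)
The plan is to prove the cycle $(1)\Rightarrow(2)\Rightarrow(3)\Rightarrow(1)$, using Lemma~\ref{8} to move between the ``kernel essential in a summand'' picture and the ``image is projective-plus-singular'' picture, and Lemma~\ref{7} to control $M$-singular modules sitting inside projectives. Throughout, write $S=\mathrm{End}_R(P)$ and recall that $P$ is projective in $\sigma(M)$, so every quotient of $P$ that is $M$-singular corresponds to an essential submodule and, conversely, a direct summand of $P$ cannot be $M$-singular unless it is zero (Lemma~\ref{7}(2)).

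For $(1)\Rightarrow(2)$: given $\varphi\in S$, write $\varphi(P)=eP\oplus P'$ with $P'$ $M$-singular and $e^2=e\in S$. By Lemma~\ref{8} applied with $N=P$, the hypothesis (2) of that lemma holds with $P_0=eP$, $S=P'$, hence $\mathrm{Ker}\varphi\unlhd fP$ for some idempotent $f\in S$; this is exactly the CS-Rickart property. For $C_2$: suppose a submodule $X\le P$ is isomorphic to a direct summand $gP$ of $P$; I must show $X$ is itself a direct summand. Choose the composite $P\xrightarrow{g}gP\xrightarrow{\sim}X\hookrightarrow P$, call it $\psi$, so $\psi(P)=X$ and $\mathrm{Ker}\psi=\mathrm{Ker}g=(1-g)P$ is already a direct summand. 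Now apply the hypothesis to $\psi$: $X=\psi(P)=e_1P\oplus P'$ with $P'$ $M$-singular. But $X\cong gP$ is projective in $\sigma(M)$, hence by Lemma~\ref{7}(1) it has no proper essential submodule coming from an $M$-singular quotient; more directly, $P'$ is a direct summand of the projective module $X$, so $P'$ is projective in $\sigma(M)$ and $M$-singular, forcing $P'=0$ by Lemma~\ref{7}(2). Thus $X=e_1P$ is a direct summand, giving $C_2$.

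For $(2)\Rightarrow(3)$: assume $P$ is CS-Rickart with $C_2$. Given $\varphi\in S$, CS-Rickart gives $\mathrm{Ker}\varphi\unlhd eP$ for an idempotent $e$, so by Lemma~\ref{8} we get $\varphi(P)=P_0\oplus T$ with $P_0$ projective in $\sigma(M)$ and $T$ $M$-singular. Since $P_0$ is (isomorphic to) a direct summand of $P$ — here one uses that $P_0$ projective in $\sigma(M)$ together with $C_2$ lets us split the surjection $P\twoheadrightarrow P_0$ onto a summand of $P$ — we conclude $\varphi(P)=P_0\oplus T$ where $P_0$ is a summand of $P$ and $T\ll P$: indeed an $M$-singular submodule of $P$ sitting as a complement to a summand is superfluous, because it meets no nonzero summand essentially. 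Hence $\mathrm{Im}\varphi$ lies above the direct summand $P_0$, so $P$ is d-CS-Rickart. For $\vartriangle(P)=\triangledown(P)$: if $\mathrm{Ker}f\unlhd P$ then the above shows $f(P)=P_0\oplus T$ with $\mathrm{Ker}f\unlhd P$ forcing $P_0=0$ (a nonzero summand would make $\mathrm{Ker}f$ non-essential by Lemma~\ref{7}), so $f(P)=T\ll P$, i.e. $f\in\triangledown(P)$; conversely if $\mathrm{Im}f\ll P$ then $f(P)$ is a superfluous submodule which is in particular $M$-singular-free only when zero, and one checks $P/\mathrm{Ker}f\cong\mathrm{Im}f\ll P$ together with $C_2$ forces $\mathrm{Ker}f\unlhd P$, so $f\in\vartriangle(P)$.

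For $(3)\Rightarrow(1)$: assume $P$ is d-CS-Rickart with $\vartriangle(P)=\triangledown(P)$. Given $\varphi\in S$, d-CS-Rickart gives a decomposition $P=P_1\oplus P_2$ with $P_1\subset\varphi(P)$ and $\varphi(P)\cap P_2\ll P_2$. Let $e$ be the projection onto $P_1$, so $eP=P_1\subseteq\varphi(P)$ and $\varphi(P)=eP\oplus(\varphi(P)\cap P_2)$ with $P':=\varphi(P)\cap P_2$ superfluous in $P_2$, hence in $P$. It remains to see $P'$ is $M$-singular. Consider $(1-e)\varphi\in S$: its image is $(1-e)\varphi(P)$, which one identifies with a copy of $P'$ (projecting $\varphi(P)=eP\oplus P'$ along $eP$), so $(1-e)\varphi$ has superfluous image, i.e. $(1-e)\varphi\in\triangledown(P)=\vartriangle(P)$, meaning $\mathrm{Ker}((1-e)\varphi)\unlhd P$. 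Then $P/\mathrm{Ker}((1-e)\varphi)$ is $M$-singular by Lemma~\ref{7}(1), and this quotient is $\cong (1-e)\varphi(P)\cong P'$, so $P'$ is $M$-singular, completing the proof that $\varphi(P)=eP\oplus P'$ as required in (1).

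The main obstacle I expect is the bookkeeping in $(2)\Rightarrow(3)$ and $(3)\Rightarrow(1)$ around why an $M$-singular complement to a direct summand is automatically superfluous (and conversely why a superfluous summand-complement is $M$-singular): these hinge on combining $C_2$ (resp. $\vartriangle=\triangledown$) with Lemma~\ref{7}, and getting the direction of the implications exactly right — in particular making sure that in $(2)\Rightarrow(3)$ one genuinely uses $C_2$ to replace the abstract projective summand $P_0\in\sigma(M)$ by an honest direct summand of $P$, rather than just a submodule isomorphic to one.
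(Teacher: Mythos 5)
Your overall architecture is sound and $(1)\Rightarrow(2)$ and $(3)\Rightarrow(1)$ essentially reproduce the paper's arguments (Lemma \ref{8} for the CS-Rickart property, Lemma \ref{7}(2) to kill the $M$-singular summand of a projective for $C_2$, and the computation $(1-e)\varphi(P)=P'$ with $\triangledown(P)=\vartriangle(P)$ plus Lemma \ref{7}(1) to see $P'$ is $M$-singular). But there is a genuine gap at the crux of $(2)\Rightarrow(3)$: you assert that the $M$-singular complement $T$ in $\varphi(P)=P_0\oplus T$ is superfluous in $P$ ``because it meets no nonzero summand essentially.'' That is not an argument: a submodule that contains no nonzero direct summand need not be small, and ``$M$-singular $\Rightarrow$ superfluous'' is false for projective modules in general --- it is exactly the nontrivial content of the equivalence (this is the module-theoretic form of $Z_r(R)\subseteq J(R)$ in the corollary [\ref{WK}, Theorem 2.4], which certainly does not hold for arbitrary ACS rings). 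The same unproved claim is then reused, circularly, to get the inclusion $\vartriangle(P)\subseteq\triangledown(P)$: you say that $\mathrm{Ker}f\unlhd P$ forces $f(P)=T$ $M$-singular ``so $f(P)=T\ll P$,'' which is precisely the statement needing proof.

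The paper closes this hole with a Jacobson-radical argument that your proposal is missing entirely: for $\phi\in\vartriangle(P)$ one has $\mathrm{Ker}(1-\phi)=0$ (it meets the essential submodule $\mathrm{Ker}\phi$ trivially), so by $C_2$ the submodule $(1-\phi)P\cong P$ is a direct summand and $1-\phi$ is left invertible; since $\vartriangle(P)$ is an ideal of $\mathrm{End}_R(P)$, this yields $\vartriangle(P)\subseteq J(\mathrm{End}_R(P))$, and then [\ref{WB}, 22.2] (for a projective module the radical of the endomorphism ring consists of endomorphisms with superfluous image) gives $\vartriangle(P)\subseteq\triangledown(P)$. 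With that inclusion in hand, the smallness of $T$ follows as in the paper: the projection $\pi$ of $\varphi(P)=P_0\oplus T$ onto $T$ gives $\mathrm{Ker}(\pi\varphi)\unlhd P$ by Lemma \ref{7}(1), hence $T=\pi\varphi(P)\ll P$. You should insert this argument (or an equivalent one) before claiming $T\ll P$; note also that the paper derives it from condition $(1)$ rather than $(2)$, but since you have already proved $(1)\Leftrightarrow(2)$ by that point in your cycle, either formulation works once the radical step is supplied.
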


\begin{proof}
$(1) \Rightarrow (2)$. The previous lemma implies that the module $P$ is a CS-Rickart module. Let $P_0$ be a submodule of $P$. If $P_0\cong eP$ for some $e^2=e \in \mathrm{End}_R (P)$, then
$P_0$ is a projective module in $\sigma(M)$. Consequently, there exists a homomorphism $f\in \mathrm{End}_R (P)$ such that
$\mathrm{Im}f = P_0$.
Since the condition of (1) and Lemma \ref{7}, $P_0$ is a direct summand of $P$.

$(2) \Rightarrow (1)$. The implication is drawn from the previous lemma.

$(1) \Rightarrow (3)$. The inclusion $\triangledown (P)\subset\vartriangle(P)$ is deduced from
Lemma \ref{7}. First, we prove  $\vartriangle(P) \subset \triangledown (P)$. Let $\phi\in \vartriangle(P)$. Since $\mathrm{Ker} (1 - \phi) = 0$ and
$P$ satisfies the $C_2$ condition, $(1 - \phi) P$ is a direct summand of $P$. Consequently,
homomorphism $1 - \phi$ is a left invertible. Since $\vartriangle (P)$ is an ideal of $\mathrm{End}_R(P)$, this implies that $\vartriangle(P) \subset J (\mathrm{End}_R (P))$.  Therefore, according to [\ref{WB}, 22.2] we have the inclusion $\vartriangle(P) \subset \triangledown(P)$.

We next demonstrate that $P$ is a d-CS-Rickart module. Let $\phi\in \mathrm{End}_R (P)$. The conditions of (1) show that $\phi(P) = eP \oplus P'$ where $P'$ is an $M$-singular module and $e^2 = e \in\mathrm{End}_R (P)$.
Let $\pi$ be the projection from $eP \oplus P'$ onto $P'$. Lemma
\ref{7} follows that $\mathrm{Ker}\pi\phi \unlhd P$. Since $\vartriangle(P) = \triangledown(P)$, this implies that $\pi\phi(P) = P'\ll P$ and so $\phi (P)$ lies above the direct summand $eP$ of $P$.

$(3) \Rightarrow (1)$. Let $\phi\in \mathrm{End}_R (P)$. Since $P$ is a d-CS-Rickart module, $\phi(P) =
eP \oplus (1 - e) \phi (P)$ for some $e^2 = e \in \mathrm{End}_R (P)$ and $(1 - e)\phi (P) \ll P$. Thus,  $(1 - e)\phi\in \vartriangle(P)=\triangledown(P)$ which leads to  $(1 - e) \phi (P)$ is an $M$-singular.
\end{proof}

\begin{corollary} [\text{[}\ref{WK}, Theorem 2.4\text{]}] The following conditions are equivalent for a ring $R$:
\begin{itemize}
 \item[(1)] $R$ is a semiregular ring and $J(R) = Z_r(R)$.
 \item[(2)] The ring $R$ is a right ACS ring which is also a right $C_{2}$ ring.
 \item[(3)] If $T$ is a finitely generated right ideal, then
$T=eR \oplus S$ where $e =e^2 \in R$ and $S$ is a right singular ideal of $R$.
\end{itemize}
\end{corollary}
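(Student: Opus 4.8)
The plan is to derive this corollary directly from Theorem \ref{9} by specializing to the case $M = R_R$ and $P = R_R$. First I observe that $R_R$ is a projective generator, so $\sigma(R_R)$ is the full category of right $R$-modules, every projective module in $\sigma(R_R)$ is just an ordinary projective module, and ``$M$-singular'' coincides with ``singular'' in the usual sense (so $Z_M(N) = Z(N)$ and $Z_r(R) = Z(R_R)$). Also $\mathrm{End}_R(R_R) \cong R$ acting by left multiplication, so an endomorphism $\varphi$ of $R_R$ is right multiplication by some element, idempotents in $\mathrm{End}_R(R_R)$ correspond to idempotents in $R$, images $eP$ become $eR$, and finitely generated right ideals are exactly the images $\varphi(R^n)$ of endomorphisms of $R^n$. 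With these dictionary entries in place, I would check that each of the three conditions of the corollary matches, up to this translation, one of the three equivalent conditions of Theorem \ref{9} (applied with $P = R^n$ for the relevant $n$, or directly with $P = R$).

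The key matching steps are: condition (3) of the corollary is precisely condition (1) of Theorem \ref{9} read for $P = R^n$ for every $n$ — a finitely generated right ideal $T$ is the image of some $\varphi \in \mathrm{End}_R(R^n)$, and the conclusion ``$T = eR^n \oplus S$ with $e$ idempotent and $S$ singular'' is the image decomposition $\varphi(P) = eP \oplus P'$; I should note here that $eR^n$, being a direct summand of $R^n$, is a finitely generated projective module, which is consistent with $T \subseteq R$. Condition (2) of the corollary says $R_R^n$ is CS-Rickart with $C_2$ for all $n$ — but ``$R$ is a right ACS ring'' is by definition the $n=1$ case of CS-Rickart, and I would invoke Proposition \ref{4}(3) (the ``$\mathrm{Mat}_n$'' remark in the abstract, or the fact that $R^n$ CS-Rickart for all $n$ is what the matrix-ring ACS condition encodes) to pass between $R$ and $R^n$; the $C_2$ (equivalently $C_2$ for all $R^n$) condition is the right $C_2$ ring condition. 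Finally condition (1) of the corollary, ``$R$ semiregular and $J(R) = Z_r(R)$,'' should be matched to condition (3) of Theorem \ref{9}: I would use that $\vartriangle(R) = \triangledown(R)$ translates, via $\mathrm{End}_R(R) \cong R$, to $\{x \in R : r_R(x) \unlhd R_R\} = \{x \in R : xR \ll R_R\}$, i.e. to $Z_r(R) = J(R)$ (using that $xR$ small means $x \in J(R)$, and $r_R(x)$ essential means $x \in Z_r(R)$), and that ``$R$ is a right d-CS-Rickart ring'' (the $n$-fold version) is equivalent to $R$ being semiregular — the latter being the statement that finitely generated right ideals lie above direct summands, which is one of the standard characterizations of semiregularity.

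The main obstacle I anticipate is the passage between the single module $R_R$ and the free modules $R^n_R$: Theorem \ref{9} is stated for a fixed projective $P$, whereas all three conditions of the corollary are genuinely statements about $R^n$ for all $n$ (ACS of all matrix rings, $C_2$ of all $R^n$, arbitrary finitely generated right ideals). So the real content is to argue that applying Theorem \ref{9} with $P$ ranging over all finitely generated free modules, together with Lemma \ref{1} (direct summands of CS-Rickart modules are CS-Rickart, so it suffices to control the free modules), yields the ring-theoretic equivalences; here Proposition \ref{4}(3) and the translation of semiregularity and of $J(R) = Z_r(R)$ into the module-theoretic language are the glue. Everything else is routine dictionary-chasing, so I would keep the written proof short — essentially ``this is Theorem \ref{9} applied to $P = R^n_R$, together with the standard facts that $R$ is semiregular iff every finitely generated right ideal lies above a direct summand and that $J(R) = Z_r(R)$ iff $\vartriangle(R_R) = \triangledown(R_R)$.''
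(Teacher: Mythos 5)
There is a genuine gap, and it lies in the architecture of your reduction rather than in the dictionary itself. The corollary is meant to be read entirely at the level of $P=R_R$; no passage to $R^{(n)}$ is needed, and the passage you propose is not justified. Condition (2) of the corollary is \emph{literally} ``$R_R$ is a CS-Rickart module satisfying $C_2$'' (that is the definition of right ACS plus right $C_2$ ring), i.e.\ condition (2) of Theorem \ref{9} with $P=R_R$ --- it is not a statement about all $R^{(n)}$. Your proposed bridge, ``$R$ ACS implies $R^{(n)}$ CS-Rickart for all $n$ via Proposition \ref{4}(3)'', does not work: Proposition \ref{4}(3) only controls finite intersections of kernels of endomorphisms of a \emph{fixed} CS-Rickart module, and says nothing about $\mathrm{End}_R(R^{(n)})\cong\mathrm{Mat}_n(R)$. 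Indeed, the whole point of Section 4 (Theorems \ref{13} and \ref{14}) is that ``$\mathrm{Mat}_n(R)$ is right ACS for all $n$'' is a strictly stronger condition (weak semiheredity) than ``$R$ is right ACS'', so this implication cannot be had for free. It does become true in the presence of $C_2$, but that is exactly Theorem \ref{11}, which is itself deduced from Theorem \ref{9} together with [\ref{WB}, 42.11]; invoking it here is either circular or smuggles in that extra input.

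The intended derivation stays with $P=R_R$ throughout. Your translation of (1) into condition (3) of Theorem \ref{9} is correct and is the right idea: $\mathrm{End}_R(R_R)\cong R$ via \emph{left} multiplications $\lambda_a$ (not right multiplications, as you wrote --- this matters, since it is $\mathrm{Ker}\,\lambda_a=r_R(a)$ that makes CS-Rickartness of $R_R$ the ACS condition), so $\vartriangle(R_R)=Z_r(R)$, $\triangledown(R_R)=J(R)$, and $R_R$ d-CS-Rickart is semiregularity. For condition (3) of the corollary you do not need $R^{(n)}$ either: a finitely generated right ideal is $a_1R+\cdots+a_nR=\lambda_{a_1}(R)+\cdots+\lambda_{a_n}(R)$, a finite sum of images of endomorphisms of $R_R$, and the equivalence of the single-image decomposition (Theorem \ref{9}(1)) with the finite-sum decomposition is precisely the content of conditions (3) and (4) of Theorem \ref{12} (proved there by induction), specialized to $P=R_R$. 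Replacing your $R^{(n)}$-bridge by this appeal to Theorem \ref{12} (or by the standard fact that semiregularity is equivalent to every finitely generated right ideal lying above a direct summand) closes the gap.
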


Next we give a necessary and sufficient condition for a ring over which every finitely generated projective module to be a CS-Rickart module which is also a $C_2$ module.
\begin{theorem}\label{11} The following conditions are equivalent for a ring $R$:
\begin{itemize}
 \item[(1)] $R$ is a semiregular ring and $J(R) = Z_r(R)$;
 \item[(2)] Every finitely generated projective $R$-module is a CS-Rickart module which is also a $C_2$ module.
\end{itemize}
\end{theorem}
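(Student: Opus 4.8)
The plan is to deduce this from Theorem \ref{9} applied with $M = R_R$, together with the already-recorded Corollary $[\ref{WK},\text{ Theorem 2.4}]$. Note first that when $M = R_R$ we have $\sigma(M) = \mathrm{Mod}\text{-}R$, the class of $M$-singular modules coincides with the class of singular modules, and the finitely generated projective $R$-modules are exactly the direct summands of free modules $R^n$; each such is a projective object in $\sigma(M)$. So Theorem \ref{9} tells us: for a fixed finitely generated projective $P$, the following are equivalent: (i) every $\varphi \in \mathrm{End}_R(P)$ satisfies $\varphi(P) = eP \oplus P'$ with $P'$ singular and $e$ idempotent; (ii) $P$ is CS-Rickart and satisfies $C_2$.

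For the direction $(1) \Rightarrow (2)$: assume $R$ is semiregular with $J(R) = Z_r(R)$, and let $P$ be finitely generated projective, so $P \oplus Q \cong R^n$ for some $Q$. First I would check that $R^n$ satisfies condition (i) of Theorem \ref{9}. By the cited Corollary $[\ref{WK},\text{ Theorem 2.4}]$, for $R$ itself every finitely generated right ideal $T$ has the form $eR \oplus S$ with $e^2 = e$ and $S$ singular; equivalently $R_R$ is CS-Rickart with $C_2$. The passage from $R$ to $R^n = \mathrm{Mat}_n(R)$-shaped endomorphism ring is the step requiring care: I would argue that semiregularity and the identity $J = Z_r$ are Morita-invariant in the relevant sense — $\mathrm{End}_R(R^n) \cong \mathrm{Mat}_n(R)$ is again semiregular with $J(\mathrm{Mat}_n(R)) = \mathrm{Mat}_n(J(R))$ and $Z_r(\mathrm{Mat}_n(R)) = \mathrm{Mat}_n(Z_r(R))$ — so the Corollary applies to $\mathrm{Mat}_n(R)$ as well, giving that $R^n$ is CS-Rickart with $C_2$, hence satisfies (i). Then, since $P$ is a direct summand of $R^n$, Lemma \ref{1} gives that $P$ is CS-Rickart; and the $C_2$ property also passes to direct summands (a summand of a $C_2$ module is $C_2$, since an isomorphic copy of a summand of $P$ is a summand of $R^n$, hence a summand of $P$). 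Alternatively, and perhaps more cleanly, I would verify directly that condition (i) of Theorem \ref{9} passes to direct summands: if $P \oplus Q$ satisfies (i) and $\psi \in \mathrm{End}_R(P)$, extend $\psi$ by zero on $Q$ to get $\tilde\psi \in \mathrm{End}_R(P\oplus Q)$; then $\tilde\psi(P\oplus Q) = \psi(P) \subseteq P$, and the decomposition $\psi(P) = e(P\oplus Q) \oplus P'$ can be intersected with $P$ to yield the required decomposition inside $P$ — this uses that $P$ is fully invariant-adjacent in the sense that $\psi(P) \subseteq P$ and modular-law arguments as in the proof of Proposition \ref{4}.

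For the direction $(2) \Rightarrow (1)$: assume every finitely generated projective $R$-module is CS-Rickart and $C_2$; in particular $R_R$ itself is CS-Rickart and $C_2$. By Theorem \ref{9} (with $P = R_R$), condition (i) holds for $R_R$, i.e. every principal — indeed every finitely generated — right ideal of $R$ has the form $eR \oplus S$ with $e$ idempotent and $S$ singular. But that is precisely condition (3) of the cited Corollary $[\ref{WK},\text{ Theorem 2.4}]$, which is equivalent to condition (1) there, namely that $R$ is semiregular with $J(R) = Z_r(R)$. I expect the main obstacle to be the $(1)\Rightarrow(2)$ direction and specifically the bookkeeping needed to lift the ring-level statement to $\mathrm{End}_R(P)$ for an arbitrary finitely generated projective $P$ — either via the matrix-ring/Morita route (where one must confirm $J$ and $Z_r$ behave correctly under $R \mapsto \mathrm{Mat}_n(R)$) or via the direct "summand-inheritance of condition (i)" argument (where one must handle the intersection-with-$P$ step carefully using the modular law, as is done for the analogous SIP-CS statement in Proposition \ref{4}). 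Everything else is a straightforward invocation of Theorem \ref{9} and the quoted corollary.
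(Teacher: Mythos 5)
Your argument is correct, but your route for $(1)\Rightarrow(2)$ differs from the paper's. The paper handles that direction in one line: since $R$ is semiregular, [\ref{WB}, 42.11] gives that every finitely generated submodule of a finitely generated projective $P$ lies above a direct summand; applied to $\varphi(P)$ this yields $\varphi(P)=eP\oplus X$ with $X\ll P$, and $J(R)=Z_r(R)$ forces $X\subseteq PJ(R)=PZ_r(R)\subseteq Z(P)$, so condition (1) of Theorem \ref{9} holds for $P$ directly and no passage through $R^{(n)}$ or matrix rings is needed. You instead establish condition (1) of Theorem \ref{9} for $R^{(n)}$ by transporting the ring-level statement of the quoted corollary to $\mathrm{Mat}_n(R)$ via Morita invariance of semiregularity, $J$, and $Z_r$, and then descend to the summand $P$; this works (the invariance facts you flag are standard, $J(\mathrm{Mat}_n(R))=\mathrm{Mat}_n(J(R))$ and $Z_r(\mathrm{Mat}_n(R))=\mathrm{Mat}_n(Z_r(R))$, and your modular-law argument for passing condition (1) of Theorem \ref{9} to a summand $P$ with $\psi(P)\subseteq P$ is sound), but it essentially re-proves part of Theorem \ref{13} and carries more bookkeeping than the module-theoretic shortcut via [\ref{WB}, 42.11]. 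Your approach has the mild virtue of staying entirely at the level of rings and the already-recorded corollary; the paper's buys brevity and avoids any Morita-invariance verification. For $(2)\Rightarrow(1)$ you and the paper do the same thing; only note that Theorem \ref{9} applied to $R_R$ immediately gives the principal-ideal statement, which is condition (2) of the quoted corollary (right ACS plus $C_2$), so you need not upgrade to finitely generated ideals by hand -- that upgrade, while true, is the content of the induction in Theorem \ref{12} rather than something to assert in passing.
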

\begin{proof} The implication $(2) \Rightarrow (1)$ is deduced from Theorem \ref{9}.

$(1) \Rightarrow (2)$. Let $P$ be a finitely generated projective
module. According to [\ref{WB}, 42.11], it follows that every
finitely generated submodule of $P$ lies above a direct
summand of $P$. The implication is drawn directly from
Theorem \ref{9}.
\end{proof}

\begin{theorem}\label{12}
Let $M$ be a right $R$-module, $P \in \sigma (M)$ and $S = \mathrm{End}_{R}(P)$.
Then, the following conditions are equivalent:
\begin{itemize}
 \item[(1)] For any $\varphi \in S$, there exists $e^2 = e \in
S$ such that $eP \subseteq \varphi P$, $(1-e)\varphi P \subseteq
Z_{M} (P)$.
\item[(2)] For any $\varphi_1, \ldots, \varphi_n \in S$,
there exists $e^2 = e \in S $ such that $eP \subset \varphi_1P + \ldots +
\varphi_n P$ and $(\varphi_1P + \ldots + \varphi_n P) \cap (1 -e) P \subset
Z_{M}(P)$.
\item[(3)] For any $\varphi_1, \ldots, \varphi_n \in S$, $\varphi_1P + \ldots + \varphi_n P = eP \oplus
U$ for some $e^2 = e \in S$ and $U \subset Z_{M}(P)$.
\item[(4)] For any $\varphi \in S $, there exists $e^2 = e \in
S$ such that $\varphi P = eP \oplus U$ where $ U \subset Z_{M}(P)$.
\end{itemize}
If $P$ is a projective module in the category $\sigma (M)$, then these conditions are
equivalent to the condition:
\begin{itemize}
\item[(5)] $P$ is a CS-Rickart module satisfying $C_{ 2}$ condition.
\end{itemize}
\end{theorem}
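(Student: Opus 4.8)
The plan is to prove the ring of equivalences $(1)\Leftrightarrow(4)$, $(2)\Leftrightarrow(3)$, $(4)\Rightarrow(3)$ among the first four conditions (the implications $(3)\Rightarrow(4)$ and $(2)\Rightarrow(1)$ being the specialization to $n=1$), and then, when $P$ is projective in $\sigma(M)$, to observe that $(4)$ is \emph{verbatim} condition $(1)$ of Theorem \ref{9}, so that $(4)\Leftrightarrow(5)$ follows from Theorem \ref{9}. Two general facts will be used throughout. First, the class of $M$-singular modules is closed under submodules, quotients and extensions, so $Z_M$ behaves like a hereditary radical: $Z_M(P)$ is a fully invariant submodule of $P$, $Z_M(N)=N\cap Z_M(P)$ for every $N\le P$, and a submodule $N\le P$ is $M$-singular precisely when $N\subseteq Z_M(P)$. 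Second, if $e^2=e\in S$ and $eP\subseteq N\le P$, then the modular law gives $N=eP\oplus(N\cap(1-e)P)$, and every $x\in N\cap(1-e)P$ satisfies $x=(1-e)x\in(1-e)N$.

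With these in hand, $(1)\Leftrightarrow(4)$ is immediate: if $eP\subseteq\varphi P$ and $(1-e)\varphi P\subseteq Z_M(P)$, then $\varphi P=eP\oplus(\varphi P\cap(1-e)P)$ with $\varphi P\cap(1-e)P\subseteq(1-e)\varphi P\subseteq Z_M(P)$; conversely, if $\varphi P=eP\oplus U$ with $U\subseteq Z_M(P)$, then $(1-e)\varphi P=(1-e)U\subseteq Z_M(P)$ by full invariance. Running the identical argument with $\varphi_1P+\dots+\varphi_nP$ in place of $\varphi P$ yields $(2)\Leftrightarrow(3)$. Thus everything reduces to proving $(4)\Rightarrow(3)$.

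I would prove $(4)\Rightarrow(3)$ by induction on $n$, the case $n=1$ being $(4)$ itself. For the inductive step, put $N=\varphi_1P+\dots+\varphi_{n-1}P$ and $\psi=\varphi_n$, and write $N=eP\oplus U_1$ with $e^2=e\in S$ and $U_1\subseteq Z_M(P)$. A direct computation shows $eP+\psi P=eP\oplus(1-e)\psi P$, the sum being direct because $(1-e)\psi P\subseteq(1-e)P$ meets $eP$ trivially. Applying $(4)$ to the endomorphism $(1-e)\psi\in S$ gives $(1-e)\psi P=fP\oplus U_2$ with $f^2=f\in S$ and $U_2\subseteq Z_M(P)$. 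Since $fP$ is a direct summand of $P$ contained in the direct summand $(1-e)P$, it is a direct summand of $(1-e)P$ (modular law), so $eP\oplus fP=gP$ for some $g^2=g\in S$, and hence $eP+\psi P=gP\oplus U_2$. Finally $N+\psi P=gP+(U_1+U_2)$, and for any $T\subseteq Z_M(P)$ one has $gP+T=gP\oplus(1-g)T$, a direct sum with $(1-g)T\subseteq Z_M(P)$ (directness since $(1-g)T\subseteq(1-g)P$; containment in $Z_M(P)$ by full invariance). Therefore $\varphi_1P+\dots+\varphi_nP=gP\oplus U_3$ with $U_3=(1-g)(U_1+U_2)\subseteq Z_M(P)$, completing the induction, and with it the equivalence of $(1)$--$(4)$.

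For the last assertion, suppose $P$ is projective in $\sigma(M)$. As noted, a submodule of $P$ is $M$-singular exactly when it is contained in $Z_M(P)$, so condition $(4)$ is literally condition $(1)$ of Theorem \ref{9}; hence $(4)\Leftrightarrow(5)$ by Theorem \ref{9}. I expect the only genuine obstacle to be the bookkeeping in the inductive step of $(4)\Rightarrow(3)$: one must verify at each stage that the displayed sums are honestly direct, and the two elementary observations singled out above---a direct summand of $P$ lying inside a direct summand $Q$ is a direct summand of $Q$, and $gP+T=gP\oplus(1-g)T$ for $M$-singular $T$---are exactly what makes this go through.
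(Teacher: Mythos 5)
Your proof is correct and takes essentially the same route as the paper's: both reduce everything to the single-endomorphism condition plus an induction on $n$ whose inductive step applies that condition to the new summand composed with the complementary idempotent $1-e$ and then merges the two direct summands (the paper phrases the induction as $(1)\Rightarrow(2)$ and verifies $M$-singularity of a quotient, while you phrase it as $(4)\Rightarrow(3)$ and build the decomposition $gP\oplus U_3$ explicitly, but the mechanism is identical), and both obtain $(5)$ by matching condition $(4)$ (equivalently $(1)$) with condition $(1)$ of Theorem \ref{9}. One minor caveat: your preliminary claim that the class of $M$-singular modules is closed under extensions is false in general (e.g.\ $\mathbb{Z}/4\mathbb{Z}$ over itself is a non-singular extension of two singular modules), but your argument never actually uses extension-closure---only closure under submodules, quotients and finite sums---so the proof stands.
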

\begin{proof} The implications $(2)\Rightarrow (3)$, $(3) \Rightarrow (4)$
are obvious. The equivalent $(1)\Leftrightarrow (4)$ holds true from
$(1-e) \varphi P \cong U$. If $P$ is a projective module
in the category $\sigma (M)$, then the equivalent $(1) \Leftrightarrow (5)$
follows from Theorem \ref{9}.

$(1) \Rightarrow (2)$. We prove the implication by induction on $n$. When $k = 1$, the result is clearly understood.
Assume that (2) is true for $k = n$. Let
$\varphi_1, \ldots, \varphi_n, \varphi_{n +1} \in S$, then
there are some idempotents $e_{1}, e_{2} \in S$ such that $e_{1}P
\subset \varphi_1P + \ldots + \varphi_n P$, $(\varphi_1P + \ldots +
\varphi_n P) \cap (1 -e_{1}) P \subset Z_{M} (P)$, $e_{2} P \subset
\varphi_{n +1} P$ and $\varphi_{n +1} P \cap (1 -e_{ 2}) P \subset
Z_{M}(P)$. There is an idempotent $e_{0} \in S $ such that
$ e_{0} P \subset (1-e_{1}) e_{2}P$, $(1-e_{1})e_{2} P= e_{0} P \oplus
(1-e_{0}) (1-e_{1}) e_{2} P$ and $(1-e_{0}) (1-e_{1}) e_{2} P \subset
Z_{M}(P)$. Then for some idempotent $e \in S$, we have $e_{1}P +
e_{2} P = eP \oplus (1 -e_{0}) (1-e_{1})e_{2}P$. Since $(\varphi_1P
+ \ldots + \varphi_n P + \varphi_{n +1} P) / eP = (e_{ 1 } P + (\varphi_1P
+ \ldots + \varphi_n P) \cap (1 -e_{ 1 }) P + e_{ 2 } P + \varphi_{n +1} P \cap
(1 -e_{ 2 }) P) / eP = (e_{ 1 } P + e_{ 2 } P) / eP + ((\varphi_1P + \ldots + \varphi_n
P) \cap (1 -e_{ 1 }) P + eP) / eP + (\varphi_{n +1} P \cap (1 -e_{ 2 }) P + eP) / eP$, it follows that
the module $(\varphi_1P + \ldots + \varphi_n P + \varphi_{n +1} P) / eP $
is an $M$-singular module. Since $(1 -e)(\varphi_1P + \ldots +
\varphi_n P + \varphi_{n +1} P) \cong (\varphi_1P + \ldots + \varphi_n P +
\varphi_{n +1} P) / eP$, this implies that $(1-e) (\varphi_1P + \ldots + \varphi_n P +
\varphi_{n +1} P) \subset Z_{M}(P)$. \end{proof}

A module $M$ is called \textit{relatively CS-Rickart to $N$} if for every $\varphi\in \mathrm{Hom}_R(M, N)$, $\mathrm{Ker}\varphi$ is an essential submodule of a direct summand of $M$. A module $M$ is called \textit{relatively d-CS-Rickart to $N$} if for every $\varphi\in \mathrm{Hom}_R(M, N)$, $\mathrm{Im}\varphi$ lies above a direct summand of $N$.

\begin{theorem}\label{9a}
Let $M=M_{1}\oplus \ldots \oplus M_{n}$  be a $C_2$ right $R$-module. If $M_i$ is relatively CS-Rickart to $M_{j}$ for all $i,j\in \{1,\ldots,n\}$ then $M$ is a CS-Rickart module.
\end{theorem}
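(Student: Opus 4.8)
The plan is to reduce to the two-summand case $M = M_1 \oplus M_2$ and then iterate, since the general case $M = M_1 \oplus \cdots \oplus M_n$ follows by grouping $M_1$ against $M_2 \oplus \cdots \oplus M_n$ once we know the class of such direct sums is closed under the construction and that $M_2 \oplus \cdots \oplus M_n$ inherits the relevant relative-CS-Rickart hypotheses (this last point needs a small check, since ``relatively CS-Rickart to'' must be promoted from the individual $M_j$ to their direct sum $M_2 \oplus \cdots \oplus M_n$; here I would use that $M$ satisfies $C_2$, hence so does each direct summand, combined with Proposition~\ref{4} applied inside $M_2 \oplus \cdots \oplus M_n$). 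So the heart of the argument is: if $M = M_1 \oplus M_2$ is a $C_2$ module with $M_i$ relatively CS-Rickart to $M_j$ for all $i,j \in \{1,2\}$, then $M$ is CS-Rickart.

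For the two-summand case, fix $\varphi \in S = \mathrm{End}_R(M)$ and write $\varphi$ as a $2 \times 2$ matrix $(\varphi_{ij})$ with $\varphi_{ij} \in \mathrm{Hom}_R(M_j, M_i)$. First I would handle the restriction $\psi := \varphi|_{M_1} : M_1 \to M_1 \oplus M_2$, which is a homomorphism from $M_1$ into $M$. The strategy is to show $\mathrm{Ker}\,\psi$ is essential in a direct summand of $M_1$: compose with the projection $\pi_1 : M \to M_1$ to get $\pi_1\psi \in \mathrm{Hom}_R(M_1, M_1)$; since $M_1$ is relatively CS-Rickart to $M_1$, $\mathrm{Ker}(\pi_1\psi)$ is essential in $eM_1$ for an idempotent $e$. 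Restricting $\psi$ further to $\mathrm{Ker}(\pi_1\psi)$ lands inside $M_2$, and using that $eM_1$ (a summand of $M_1$, hence of $M$, hence $C_2$) is relatively CS-Rickart to $M_2$, one peels off another essential-in-a-summand piece. This ``staircase'' argument—first kill the $M_1$-component of the image, then work inside the resulting kernel where the map goes into $M_2$—is the standard device; it shows $\mathrm{Ker}\,\psi$ is essential in a direct summand $K_1$ of $M_1$.

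Next, $\varphi$ induces a map $\bar\varphi : M/K_1 \to M$; the point is that $M/K_1 \cong (M_1/K_1) \oplus M_2$ with $M_1/K_1$ a direct summand because $K_1$ is a summand of $M_1$, so $\bar\varphi$ is again a homomorphism out of a direct sum of two modules of the same ``type'' (summands of $M_1$, or $M_2$), and crucially $\bar\varphi$ is injective on the first summand $M_1/K_1$. Then I would run the same essential-socle/staircase argument on the restriction of $\bar\varphi$ to $M_2$ to find $\mathrm{Ker}(\bar\varphi|_{M_2})$ essential in a summand $K_2$ of $M_2$, and finally combine: $\mathrm{Ker}\,\varphi$ pulls back to something essential in $K_1 \oplus K_2$, a direct summand of $M_1 \oplus M_2 = M$. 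Essentiality of the intersection/pullback at each stage is preserved because extensions and finite sums of essential submodules behave well, and one invokes Proposition~\ref{4}(2) (or (3)) to replace ``essential in a summand, then essential in another summand'' by a single summand. The main obstacle I anticipate is bookkeeping in the staircase step—ensuring that after restricting to $\mathrm{Ker}(\pi_1\psi)$ the induced map genuinely has codomain $M_2$ and that the $C_2$ hypothesis is available for the intermediate summand $eM_1$ so that ``relatively CS-Rickart to $M_2$'' can be applied to it rather than only to $M_1$ itself; this is where the hypothesis that the \emph{whole} $M$ (and hence all its summands) is $C_2$, not just each $M_i$, does real work, and I would make sure Lemma~\ref{1} and the summand-closure of $C_2$ are cited explicitly.
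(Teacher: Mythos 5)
Your strategy is genuinely different from the paper's: the paper does not analyse kernels componentwise at all, but instead invokes the Nicholson--Zhou theory of semiregular morphisms ([\ref{sm1}, Theorems 10 and 22], [\ref{sm2}, Corollary 3.2]) to conclude that $S=\mathrm{End}_R(M)$ is semiregular with $J(S)=\vartriangle(S)$, and then reads off the CS-Rickart property from [\ref{WB}, 41.22]. Unfortunately, your route has a genuine gap at the final ``combine'' step. For $\varphi\in\mathrm{End}_R(M_1\oplus M_2)$ the kernel is the pullback-type submodule $\mathrm{Ker}\,\varphi=\{(m_1,m_2)\mid \varphi(m_1)=-\varphi(m_2)\}$, which in general is neither contained in nor essential in $K_1\oplus K_2$, where $K_i$ is the direct summand of $M_i$ containing $\mathrm{Ker}(\varphi|_{M_i})$ essentially. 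Concretely, take $M_1=M_2=N$ and $\varphi(m_1,m_2)=(m_1-m_2,0)$: both restrictions $\varphi|_{M_i}$ are injective, so $K_1=K_2=0$, yet $\mathrm{Ker}\,\varphi$ is the diagonal copy of $N$ --- a nonzero direct summand lying ``across'' the two factors rather than inside either one. Relatedly, the map $\bar\varphi\colon M/K_1\to M$ you introduce is not well defined, since $\varphi$ need not vanish on $K_1$ (it vanishes only on the essential submodule $\mathrm{Ker}(\varphi|_{M_1})$ of $K_1$).

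The deeper problem is that the $C_2$ hypothesis never does real work in your argument: you use it only to pass $C_2$ and relative CS-Rickartness to intermediate summands, which in fact requires no $C_2$ at all. Its actual role is to handle exactly the diagonal (graph) pieces of $\mathrm{Ker}\,\varphi$ exhibited above, which are isomorphic to direct summands and must be shown to actually be direct summands; in the paper this is packaged into the semiregularity of the matrix of morphism sets $\mathrm{Hom}_R(M_i,M_j)$ and its stability under finite direct sums ([\ref{sm1}, Theorem 22]). Your first step --- that each $\mathrm{Ker}(\varphi|_{M_i})$ is essential in a direct summand of $M_i$, obtained by intersecting the kernels of the components $\pi_j\varphi|_{M_i}$ via Proposition \ref{4} --- is correct, but it is only the easy half of the argument, and no iteration of it can capture the cross-terms of the kernel.
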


\begin{proof}
By [\ref{sm1}, Theorem 22] and [\ref{sm2}, Corollary 3.2]), we have $S=\mathrm{End}_{R}(M)$ is a semiregular ring. On the other hand, according to [\ref{sm1}, Theorem 10] and [\ref{sm2}, Corollary 3.2])  we have the equalities
$$J(S)=[J(\mathrm{Hom}_{R}(M_{i},M_{j})]=[\vartriangle(\mathrm{Hom}_{R}(M_{i},M_{j})]=\vartriangle[(\mathrm{Hom}_{R}(M_{i},M_{j})].$$
It follows that $M$ is a CS-Rickart module by [\ref{WB}, 41.22].
\end{proof}

\begin{corollary} \label{c1}
Let $M =M_1 \oplus \ldots \oplus M_n$ be a right $R$-module of finite length. If $\mathrm{Hom}_{R}(\mathrm{Soc}(M_{i}), \mathrm{Soc} (M_{j})) = 0$ for each pair of different indices $i, j \in \{1, \ldots, n \}$ and $ M_ {i}$ is an indecomposable CS-Rickart module
for every $i \in \{1, \ldots, n \}$, then $M$ is a CS-Rickart module.
\end{corollary}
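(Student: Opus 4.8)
The plan is to apply Theorem \ref{9a} to the given decomposition $M=M_1\oplus\cdots\oplus M_n$, so it suffices to check its two hypotheses: that $M$ is a $C_2$ module, and that $M_i$ is relatively CS-Rickart to $M_j$ for all $i,j$. The second requirement is the easy one. For $i=j$ it is exactly the assumption that $M_i$ is CS-Rickart. For $i\neq j$ and any $\varphi\in\mathrm{Hom}_R(M_i,M_j)$, the submodule $\varphi(\mathrm{Soc}(M_i))$ is a homomorphic image of a semisimple module, hence semisimple, hence contained in $\mathrm{Soc}(M_j)$; thus $\varphi|_{\mathrm{Soc}(M_i)}$ is an element of $\mathrm{Hom}_R(\mathrm{Soc}(M_i),\mathrm{Soc}(M_j))=0$, so $\mathrm{Soc}(M_i)\subseteq\mathrm{Ker}\varphi$. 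Since $M_i$ has finite length, $\mathrm{Soc}(M_i)\unlhd M_i$, and therefore $\mathrm{Ker}\varphi\unlhd M_i$, an essential submodule of the direct summand $M_i$ of $M_i$. So $M_i$ is relatively CS-Rickart to $M_j$ in every case.

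The substantive part is showing $M$ is a $C_2$ module. First I record two structural consequences of the hypothesis. If a simple module $T$ occurred as a summand of both $\mathrm{Soc}(M_i)$ and $\mathrm{Soc}(M_j)$ with $i\neq j$, then the composite $\mathrm{Soc}(M_i)\twoheadrightarrow T\hookrightarrow\mathrm{Soc}(M_j)$ would be a nonzero element of $\mathrm{Hom}_R(\mathrm{Soc}(M_i),\mathrm{Soc}(M_j))$, a contradiction; so the socles $\mathrm{Soc}(M_i)$ have pairwise no common simple summand, and in particular the $M_i$ are pairwise non-isomorphic. Next, since each $M_i$ is indecomposable of finite length, $\mathrm{End}_R(M_i)$ is local, so by the Krull--Schmidt--Azumaya theorem every direct summand of $M$ is isomorphic to $D_0:=\bigoplus_{i\in B}M_i$ for some subset $B\subseteq\{1,\dots,n\}$.

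Now let $N\subseteq M$ with $N$ isomorphic to a direct summand of $M$, and fix $B$ with $N\cong D_0$. Let $T\subseteq N$ be any simple submodule. Then $T$ has the same isomorphism type as some simple summand of $\mathrm{Soc}(N)\cong\mathrm{Soc}(D_0)=\bigoplus_{i\in B}\mathrm{Soc}(M_i)$, say a simple summand of $\mathrm{Soc}(M_{i_0})$ with $i_0\in B$; by the disjointness above this type does not occur in $\mathrm{Soc}(M_j)$ for $j\neq i_0$. Viewing $T\subseteq\mathrm{Soc}(M)=\bigoplus_i\mathrm{Soc}(M_i)$, each projection $T\to\mathrm{Soc}(M_j)$ with $j\neq i_0$ has simple or zero image of a type not occurring in $\mathrm{Soc}(M_j)$, hence is zero; so $T\subseteq M_{i_0}\subseteq D_0$. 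As $T$ was arbitrary, $\mathrm{Soc}(N)\subseteq D_0$. Write $M=D_0\oplus D_0'$ with $D_0'=\bigoplus_{i\notin B}M_i$, and let $\pi\colon M\to D_0$ and $\pi'\colon M\to D_0'$ be the projections. Then $\mathrm{Soc}(N\cap D_0')=\mathrm{Soc}(N)\cap D_0'\subseteq D_0\cap D_0'=0$, so the finite-length module $N\cap D_0'=\mathrm{Ker}(\pi|_N)$ is zero; hence $\pi|_N\colon N\to D_0$ is a monomorphism between modules of equal finite length, so an isomorphism. Consequently $N=\{\,x+g(x):x\in D_0\,\}$ is the graph of $g:=\pi'|_N\circ(\pi|_N)^{-1}\colon D_0\to D_0'$, whence $M=N\oplus D_0'$ and $N$ is a direct summand of $M$. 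Thus $M$ is $C_2$, and Theorem \ref{9a} yields that $M$ is CS-Rickart.

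The one genuinely delicate point is the socle bookkeeping of the third paragraph (upgrading ``$N$ is isomorphic to a direct summand'' to ``$\mathrm{Soc}(N)$ sits inside a sub-sum $D_0$ of the fixed decomposition''), which is where the hypothesis $\mathrm{Hom}_R(\mathrm{Soc}(M_i),\mathrm{Soc}(M_j))=0$ is really used; the passage from there to $C_2$ via the projection/graph argument, and the verification of the relative CS-Rickart condition, are both formal.
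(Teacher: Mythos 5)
Your proposal is correct and follows the same route the paper intends: the corollary is stated immediately after Theorem \ref{9a} with no written proof, so the whole content is the verification of that theorem's two hypotheses, which you carry out in full. Your check of the relative CS-Rickart condition is the routine part, and your socle/Krull--Schmidt argument for the $C_2$ condition (pairwise disjoint socle types $\Rightarrow$ every submodule isomorphic to a direct summand has its socle inside a partial sum $\bigoplus_{i\in B}M_i$, hence is a graph over it and splits off) is exactly the non-obvious step the paper leaves implicit; I see no gap in it.
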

The following example shows that a direct sum of CS-Rickart modules is not a CS-Rickart
module, in general.

\begin{example}\label{e2}
Let $K$ be a field and
$$R=\left\{
      \left(
               \begin{array}{ccc}
                \alpha_1 & \alpha_2 & \alpha_3 \\
                0 & \alpha_4 & 0 \\
                0 & 0 & \alpha_5
               \end{array}
      \right)\ |\ \alpha_1, \alpha_2, \alpha_3\in K
    \right\}.$$

Consider

$$e=\left(
               \begin{array}{ccc}
                1 & 0 & 0 \\
                0 & 0 & 0 \\
                0 & 0 & 0
               \end{array}
      \right)$$
and
$$S_1=\left\{
      \left(
               \begin{array}{ccc}
                0 & \alpha & 0 \\
                0 & 0 & 0 \\
                0 & 0 & 0
               \end{array}
      \right)\ |\ \alpha\in K
    \right\},
S_2=\left\{
      \left(
               \begin{array}{ccc}
                0 & 0 & \alpha \\
                0 & 0 & 0 \\
                0 & 0 & 0
               \end{array}
      \right)\ |\ \alpha\in K
    \right\}.$$

We note that $S_1$, $S_2$ are simple right $R$-modules and non-isomorphic. It is clear that $eR$ and $eR/S_1$ are CS-Rickart modules. Now consider $M=eR\oplus eR/S_1$.
We prove that $M$ is not CS-Rickart. Let $\varphi: eR\to eR/S_1$ and
$\langle\varphi\rangle=\{a+\varphi(a)\mid a\in eR\}$. Then we have
$\langle\varphi\rangle\oplus eR/S_1=M$ and $\langle\varphi\rangle\cap eR=S_1$. If $M$ is CS-Rickart then $S_1\unlhd \pi M$, for some $\pi^2=\pi\in \textrm{End}_R(M)$. By Krull-Remak-Schmidt theorem, $\pi M\cong eR$ or $\pi M\cong eR/S_1$. But $\pi M\cong eR$ is not hold true because $eR$ is not uniform, so $\pi M\cong eR/S_1$. Whenever this happens, we have a contraction since $\mathrm{Soc}(\pi M)=S_1\not\cong \mathrm{Soc}(eR/S_1)\cong S_2$. Thus $M$ is not a CS-Rickart module. Also, in connection with Corollary \ref{c1}, we note that $\mathrm{Hom}_{R}(\mathrm{Soc}(eR), \mathrm{Soc} (eR/S_1)) \not= 0.$
\end{example}

The following assertion is proved similarly to the previous theorem.

\begin{theorem}\label{9b} Let $M=M_{1}\oplus \ldots \oplus M_{n}$  be a $D_2$ right $R$-module. If $M_i$ relatively d-CS-Rickart to $M_{j}$ for all $i,j\in \{1,\ldots,n\}$ then $M$ is a d-CS-Rickart module.
\end{theorem}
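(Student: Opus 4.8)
The plan is to carry out the left--right dual of the proof of Theorem~\ref{9a}, systematically reading kernels as images, essential submodules as superfluous ones, the $C_2$ condition as $D_2$, and $\vartriangle(-)$ as $\triangledown(-)$, and invoking the dual of each auxiliary result. First I would note that a direct summand of a $D_2$ module is again $D_2$, so every $M_i$ is a $D_2$ module; together with the assumed relative d-CS-Rickart property of the $M_i$ this is exactly the dual of the hypothesis of Theorem~\ref{9a}. Hence, just as there, [\ref{sm1}, Theorem 22] and [\ref{sm2}, Corollary 3.2] show that $S=\mathrm{End}_{R}(M)$ is a semiregular ring, and [\ref{sm1}, Theorem 10] gives the matrix description $J(S)=[J(\mathrm{Hom}_{R}(M_{i},M_{j}))]$.

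The crux is then the identity
\[
J(\mathrm{Hom}_{R}(M_{i},M_{j}))=\triangledown(\mathrm{Hom}_{R}(M_{i},M_{j})):=\{f\in\mathrm{Hom}_{R}(M_{i},M_{j})\mid \mathrm{Im}f\ll M_{j}\},
\]
which is the relative analogue, for a pair of summands, of the equality $\vartriangle(P)=\triangledown(P)$ established inside Theorem~\ref{9}. Granting it, one finishes as follows: an endomorphism $f=(f_{ij})$ of $M$ has $\mathrm{Im}f\ll M$ if and only if each component $f_{ij}\colon M_{i}\to M_{j}$ has $\mathrm{Im}f_{ij}\ll M_{j}$, since in one direction $\mathrm{Im}f$ is contained in the finite sum $\sum_{i,j}\mathrm{Im}f_{ij}$ of submodules superfluous in the direct summands $M_{j}$ and hence in $M$, while in the other $\mathrm{Im}f_{ij}$ is the image of the superfluous submodule $\mathrm{Im}f$ under the projection $M\to M_{j}$, so superfluous in $M_{j}$. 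Thus $[\triangledown(\mathrm{Hom}_{R}(M_{i},M_{j}))]=\triangledown(M)$, whence $J(S)=\triangledown(M)$; since $S$ is semiregular, [\ref{WB}, 41.22] then yields that $M$ is a d-CS-Rickart module, exactly as in the proof of Theorem~\ref{9a}.

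The main obstacle is the displayed identity when $i\neq j$, because the single-module arguments of Theorem~\ref{9} do not literally apply to a Hom-group; I would reduce it to those arguments, treating the two inclusions separately. For $\triangledown\subseteq J$: dualizing the step $(1)\Rightarrow(3)$ of Theorem~\ref{9}, the $D_2$ property of $M_j$ first gives $\triangledown(\mathrm{End}_{R}(M_{j}))\subseteq J(\mathrm{End}_{R}(M_{j}))$; then, if $\mathrm{Im}f\ll M_{j}$, for every $g\in\mathrm{Hom}_{R}(M_{j},M_{i})$ the endomorphism $fg$ of $M_{j}$ has image contained in $\mathrm{Im}f$, hence lies in $\triangledown(\mathrm{End}_{R}(M_{j}))\subseteq J(\mathrm{End}_{R}(M_{j}))$, which forces $f\in J(\mathrm{Hom}_{R}(M_{i},M_{j}))$. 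For $J\subseteq\triangledown$: given $f\in J(\mathrm{Hom}_{R}(M_{i},M_{j}))$, the relative d-CS-Rickart hypothesis yields $M_{j}=N_{1}\oplus N_{2}$ with $N_{1}\subseteq\mathrm{Im}f$ and $N_{2}\cap\mathrm{Im}f\ll N_{2}$; composing $f$ with the projection of $M_{j}$ onto $N_{1}$ gives a surjection $M_{i}\to N_{1}$ whose kernel is a direct summand of $M_{i}$ by the $D_2$ condition (extend by zero to an endomorphism of $M$ with image the summand $N_{1}$ and apply $D_2$ to $M$), and the induced splitting produces $g\in\mathrm{Hom}_{R}(M_{j},M_{i})$ with $\mathrm{Im}(1-fg)\subseteq N_{2}$; were $N_{1}\neq 0$ this would make $1-fg$ fail to be surjective, contradicting $f\in J(\mathrm{Hom}_{R}(M_{i},M_{j}))$, so $N_{1}=0$ and $\mathrm{Im}f=N_{2}\cap\mathrm{Im}f\ll M_{j}$. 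With this identity in hand, the rest is the formal bookkeeping above.
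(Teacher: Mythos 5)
Your proposal is correct and is essentially the paper's own argument: the paper dispatches Theorem \ref{9b} with the single sentence that it is ``proved similarly to the previous theorem,'' and what you have written is precisely that dualization of the proof of Theorem \ref{9a} (semiregularity of $\mathrm{End}_R(M)$ via [\ref{sm1}], the matrix description of $J(S)$, and the identification of $J(\mathrm{Hom}_R(M_i,M_j))$ with $\triangledown(\mathrm{Hom}_R(M_i,M_j))$), with the details the paper omits correctly filled in.
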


We conclude this section with a complete characterization for a module when its endomorphism ring is a regular ring. The following statement follows directly from [\ref{sm1}, Theorem 29].

\begin{theorem}[\text{[}\ref{GL3}, Theorem 2.29\text{]}, \text{[}\ref{GL2}, Theorem 5.11\text{]}] \label{9c}
Let $M = M_{1} \oplus \ldots \oplus M_{n}$ be a decomposition of the right $R$-module $M$. Then the following conditions are equivalent:

\begin{itemize}
  \item[(1)] $\mathrm{End}_{R}(M)$ is a regular ring.
  \item[(2)] $M_ {i}$ is $M_{j}$-Rickart which is also $M_{j}$-$C_ {2}$ for each pair of indices $1 \leq i, j \leq n$.
  \item [(3)] $M_{i}$ is $M_{j}$-dual Rickart which is also $M_ {j}$-$D_{2}$ module for each pair of indices $ 1 \leq i, j \leq n$.
  \item[(4)] $M$ is a Rickart module which is also a dual Rickart module.
\end{itemize}
\end{theorem}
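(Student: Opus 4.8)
The plan is to obtain all four equivalences from two ingredients: the classical description of when an endomorphism ring is von Neumann regular, and the structural result [\ref{sm1}, Theorem 29] on endomorphism rings of finite direct sums (which is also what the cited theorems [\ref{GL3}, Theorem 2.29] and [\ref{GL2}, Theorem 5.11] rest on).

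First I would record the fact that, for any module $M$ with $S=\mathrm{End}_R(M)$, the ring $S$ is regular if and only if for every $\varphi\in S$ both $\mathrm{Ker}\,\varphi$ and $\mathrm{Im}\,\varphi$ are direct summands of $M$; that is, if and only if $M$ is simultaneously a Rickart module and a dual Rickart module. For the forward direction, if $\varphi=\varphi\psi\varphi$ then $\psi\varphi$ and $\varphi\psi$ are idempotents of $S$ with $\mathrm{Ker}(\psi\varphi)=\mathrm{Ker}\,\varphi$ and $\mathrm{Im}(\varphi\psi)=\mathrm{Im}\,\varphi$, so both are summands. For the converse, writing $\mathrm{Ker}\,\varphi=eM$ and $\mathrm{Im}\,\varphi=fM$ with $e=e^2,f=f^2\in S$, the map $\varphi$ restricts to an isomorphism $(1-e)M\to fM$; composing its inverse with the projection of $M$ onto $fM$ along the chosen complement produces a quasi-inverse of $\varphi$. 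This establishes $(1)\Leftrightarrow(4)$.

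Next, to connect $(1)$ with the relative conditions $(2)$ and $(3)$, I would regard $S=\mathrm{End}_R(M)$ as the formal $n\times n$ matrix ring whose $(i,j)$-entry is $\mathrm{Hom}_R(M_j,M_i)$, with multiplication the usual matrix product of homomorphisms, and apply [\ref{sm1}, Theorem 29], which characterizes regularity of such a ring componentwise. Translated back into module language, the resulting conditions say precisely that $M_i$ is $M_j$-Rickart and $M_j$-$C_2$ for every pair $i,j$; the dual version of the same theorem — equivalently, the left–right symmetry of regularity combined with $(1)\Leftrightarrow(4)$ — yields the corresponding statement in terms of $M_i$ being $M_j$-dual-Rickart and $M_j$-$D_2$. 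Hence $(2)$ and $(3)$ are just two reformulations of $(1)$, and the cycle of equivalences closes.

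The only genuine care needed — hence the one place where an error could creep in — is aligning the relative Rickart/$C_2$ (resp. dual-Rickart/$D_2$) hypotheses with the exact componentwise conditions in [\ref{sm1}, Theorem 29], i.e.\ keeping track of which $\mathrm{Hom}_R(M_i,M_j)$ plays the role of which matrix entry and in which order. This is purely bookkeeping; no idea beyond the two cited facts is required, which is why the statement indeed follows directly from [\ref{sm1}, Theorem 29].
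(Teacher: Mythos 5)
Your proposal is correct and follows essentially the same route as the paper, which likewise derives the theorem directly from [\ref{sm1}, Theorem 29] applied to $\mathrm{End}_R(M)$ viewed as the formal matrix ring with entries $\mathrm{Hom}_R(M_j,M_i)$. Your explicit quasi-inverse construction for $(1)\Leftrightarrow(4)$ is a standard and correct supplement that the paper leaves implicit in the citation.
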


\section{Weakly semihereditary rings}

The following two statements can be verified directly from the well-known facts of  perfect rings and semiregular rings.

\begin{lemma}
The following conditions are equivalent for a ring $R$:
\begin{itemize}
    \item[(1)] $R$ is a semiregular ring.
    \item[(2)] Every finitely generated projective right $R$-module
is a d-CS-Rickart module.
\end{itemize}
\end{lemma}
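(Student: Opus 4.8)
The plan is to prove the equivalence of the two conditions by connecting the semiregularity of $R$ to the lifting property of finitely generated submodules of finitely generated projective modules, which is exactly what the d-CS-Rickart property demands after passing through the image of an endomorphism.

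For the implication $(1)\Rightarrow(2)$, I would start from a finitely generated projective right $R$-module $P$ and an arbitrary $\varphi\in\mathrm{End}_R(P)$. The image $\varphi P$ is a finitely generated submodule of $P$. Since $R$ is semiregular, by [\ref{WB}, 42.11] every finitely generated submodule of a projective module lies above a direct summand; in particular $\varphi P$ lies above a direct summand of $P$. This is precisely the d-CS-Rickart condition, so $P$ is a d-CS-Rickart module. The only mild subtlety here is ensuring that the cited fact about semiregular rings is stated in the form "finitely generated submodules of projective modules lie above direct summands"; if the reference gives it only for $P=R_R^{(n)}$ one must note that any finitely generated projective $P$ is a direct summand of some $R_R^{(n)}$ and that lying above a direct summand passes to direct summands, which follows from the additivity of the relevant idempotents (see [\ref{JC}, 22.1]).

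For the converse $(2)\Rightarrow(1)$, the natural route is to reduce to showing that every finitely generated right ideal of $R$ lies above a direct summand of $R_R$, which characterizes semiregular rings. Given a finitely generated right ideal $T=a_1R+\dots+a_nR$, consider the free module $F=R_R^{(n)}$ (finitely generated projective, hence d-CS-Rickart by hypothesis) and the endomorphism $\varphi\in\mathrm{End}_R(F)$ whose image is $T$ viewed appropriately — concretely, take $\varphi$ to be the composite $F\twoheadrightarrow T\hookrightarrow R\hookrightarrow F$ (embedding $R$ as the first coordinate of $F$), so $\mathrm{Im}\varphi\cong T$ sits as the first-coordinate copy of $T$ inside $F$. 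By the d-CS-Rickart property, $\mathrm{Im}\varphi$ lies above a direct summand of $F$; intersecting with the first coordinate copy of $R_R$ (a direct summand of $F$) and using that lying above a direct summand is inherited by intersection with direct summands that contain the submodule, one deduces that $T$ lies above a direct summand of $R_R$. Since this holds for every finitely generated right ideal, $R$ is semiregular.

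The main obstacle I anticipate is the bookkeeping in $(2)\Rightarrow(1)$: verifying that "lies above a direct summand" descends correctly along the embedding $R_R\hookrightarrow F$ and the projection $F\twoheadrightarrow R_R$, i.e. that from a decomposition $F=N_1\oplus N_2$ with $N_1\subseteq\mathrm{Im}\varphi$ and $N_2\cap\mathrm{Im}\varphi$ small in $N_2$ one extracts a decomposition of the first-coordinate copy of $R_R$ witnessing that $T$ lies above a direct summand there. This is the kind of small-submodule manipulation handled by [\ref{JC}, 22.1] together with the fact that a summand of a summand is a summand; it is routine but is the only place where care is needed. Everything else is a direct citation of the structure theory of semiregular rings.
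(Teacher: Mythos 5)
Your proposal is correct and uses exactly the ingredients the paper has in mind: the paper states this lemma without proof (``can be verified directly from the well-known facts of\ldots semiregular rings''), and your forward direction is the same appeal to [\ref{WB}, 42.11] that the paper itself makes in the proof of Theorem \ref{11}. The only remark worth making is that your converse is more elaborate than necessary: since $R$ is semiregular as soon as every \emph{principal} right ideal lies above a direct summand of $R_R$, it suffices to apply the d-CS-Rickart hypothesis to $P=R_R$ and the left-multiplication endomorphism $x\mapsto ax$, whose image is $aR$; this avoids the passage through $R_R^{(n)}$ and the descent bookkeeping entirely (though your descent argument via modularity and the fact that a small submodule of $M$ contained in a direct summand is small in that summand is itself sound).
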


\begin{lemma}
The following conditions are equivalent for a ring $R$:
\begin{itemize}
    \item[(1)] $R$ is a right perfect ring.
    \item[(2)] Every projective right $R$-module
is a d-CS-Rickart module.
\end{itemize}
\end{lemma}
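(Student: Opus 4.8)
The plan is to pin both implications on the classical characterization of right perfect rings in terms of the lifting property, namely that $R$ is right perfect if and only if every projective right $R$-module is a \emph{lifting} module (the ``unbounded'' counterpart of the semiregular statement [\ref{WB}, 42.11] that was already used above; see [\ref{WB}, 43.9]). Here we use that a module is lifting exactly when each of its submodules lies above a direct summand. Granting this, $(1)\Rightarrow(2)$ is immediate: if $R$ is right perfect and $P$ is a projective right $R$-module, then $P$ is lifting, so in particular $\mathrm{Im}\,\varphi$ lies above a direct summand of $P$ for every $\varphi\in\mathrm{End}_R(P)$, i.e. $P$ is d-CS-Rickart.

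The substance is $(2)\Rightarrow(1)$, which I would prove by showing that (2) forces every projective right $R$-module to be lifting, and then quoting the characterization above. Fix a projective module $P$ and an arbitrary submodule $N\le P$. First I would realize $N$ as the image of an endomorphism of a larger projective module: choose a free module $F$ together with an epimorphism $g\colon F\to N$, regard $g$ as a homomorphism $F\to P$, and note that $F\oplus P$ is projective while the endomorphism $\psi$ of $F\oplus P$ given by $\psi(f,p)=(0,g(f))$ has $\mathrm{Im}\,\psi=0\oplus N$. Applying (2) to $F\oplus P$ produces a decomposition $F\oplus P=A\oplus B$ with $A\subseteq 0\oplus N$ and $B\cap(0\oplus N)$ small in $B$. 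Identifying $P$ with $0\oplus P$ and using $A\subseteq P$, the modular law gives $P=A\oplus(P\cap B)$; moreover $P\cap B$ is a direct summand of $B$ (in the refined decomposition $F\oplus P=F\oplus A\oplus(P\cap B)$ the projection onto $P\cap B$ restricts to an idempotent of $\mathrm{End}_R(B)$ with image $P\cap B$), so the projection $B\to P\cap B$ sends $N\cap B=N\cap(P\cap B)$, which is small in $B$, onto a submodule small in $P\cap B$. Hence $N$ lies above the direct summand $A$ of $P$. Since $N$ was arbitrary, $P$ is lifting, and the cited equivalence yields that $R$ is right perfect.

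The main obstacle is exactly this last ``descent'' step: d-CS-Rickartness of $F\oplus P$ only tells us that $N$ lies above a direct summand of the \emph{larger} module $F\oplus P$, and one has to transfer that information back to $P$. The two points to check carefully are that $P\cap B$ remains a direct summand of $B$, and that the smallness of $N\cap B$ inside $B$ survives the passage to $P\cap B$; both are routine consequences of the containment $A\subseteq P$, via the modular law and the standard fact that homomorphic images of superfluous submodules are superfluous. Everything else is formal, the single external ingredient being the classical equivalence between right perfectness and the lifting property of all projective right $R$-modules (and, if one prefers a self-contained argument, this equivalence can itself be reproved by the same projective-cover/modular-law bookkeeping).
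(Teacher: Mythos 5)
Your proof is correct. The paper itself gives no argument for this lemma (it is dispatched with the remark that it ``can be verified directly from the well-known facts of perfect rings''), so what you have written is a legitimate filling-in of the omitted details rather than a divergence from the paper. Your two ingredients are exactly the right ones: the classical equivalence ``$R$ right perfect $\Leftrightarrow$ every projective right $R$-module is lifting'' handles $(1)\Rightarrow(2)$ immediately, and for $(2)\Rightarrow(1)$ your device of realizing an arbitrary submodule $N\le P$ as $\mathrm{Im}\,\psi$ for $\psi\in\mathrm{End}_R(F\oplus P)$ with $F$ free mapping onto $N$ is the natural adaptation of the trick the paper itself uses in Lemma \ref{a2} (there one may take $N\oplus M$ directly because \emph{all} modules are assumed d-CS-Rickart; here you must replace $N$ by a free cover to stay inside the class of projectives, which you do). The ``descent'' step you flag is indeed the only point needing care, and your treatment of it is sound: since $A\subseteq 0\oplus N\subseteq 0\oplus P$, the modular law gives $P=A\oplus(P\cap B)$ and $B=(P\cap B)\oplus\bigl(B\cap(A\oplus F)\bigr)$, and then $N\cap B=N\cap(P\cap B)$ is the image of the small submodule $N\cap B$ of $B$ under the projection onto $P\cap B$, hence small in $P\cap B$ by the standard fact that homomorphic images of superfluous submodules are superfluous in the codomain. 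This exhibits $N$ as lying above the direct summand $A$ of $P$, so every projective module is lifting and the cited equivalence closes the argument.
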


We describe the ring $R$ over
which $\mathrm{Mat}_n (R)$ is a right ACS ring for any
$n \in \mathbb {N}$.
\begin {theorem}\label{13}
The following conditions are equivalent for a ring $R$ and a fixed
$n \in \mathbb {N}$:
\begin{itemize}
    \item[(1)] Every $n$-generated projective right $R$-module is a CS-Rickart module.
    \item[(2)] The free $R$-module $R_{R}^{(n)}$ is a CS-Rickart module.
    \item[(3)] $\mathrm{Mat}_n (R)$ is a right ACS ring.
    \item[(4)] Every $n$-generated right ideal of $R$ has the form
$P \oplus S$, where $P$ is a projective $R$-module and $S$ is a singular
right ideal of $R$.
    \item [(5)] $R$-module $R_{R}^{(n)}$ is relatively CS-Rickart to $R_{R}$.
    \item[(6)] Every $n$-generated submodule of $R_{R}^{(n)}$ has the form
$P_{1}\oplus \ldots\oplus  P_{n}\oplus S$, where every $P_{1}, \ldots,  P_{n}$ is a projective module which is isomorphic to a submodule of $R_{R}$  and $S$ is a singular module.
\end{itemize}
\end{theorem}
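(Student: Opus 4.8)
The plan is to prove the cycle $(1)\Leftrightarrow(2)\Leftrightarrow(3)$ together with $(2)\Rightarrow(5)\Rightarrow(4)\Rightarrow(6)\Rightarrow(2)$. Several links are short. Since $R_R^{(n)}$ is itself $n$-generated projective, $(1)\Rightarrow(2)$ is immediate, and since any $n$-generated projective module is a direct summand of $R_R^{(n)}$, the converse $(2)\Rightarrow(1)$ follows from Lemma \ref{1}(1). For $(2)\Leftrightarrow(3)$ I would use that $R_R^{(n)}$ is a progenerator, so that, identifying $\mathrm{End}_R(R_R^{(n)})$ with $\mathrm{Mat}_n(R)$, the functor $\mathrm{Hom}_R(R_R^{(n)},-)$ is an equivalence between the category of right $R$-modules and the category of right $\mathrm{Mat}_n(R)$-modules. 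Under it, $R_R^{(n)}$ corresponds to $\mathrm{Mat}_n(R)$ as a right module over itself, and for $\varphi\in\mathrm{Mat}_n(R)=\mathrm{End}_R(R_R^{(n)})$ the submodule $\mathrm{Ker}\,\varphi$ of $R_R^{(n)}$ corresponds to $\mathrm{Hom}_R(R_R^{(n)},\mathrm{Ker}\,\varphi)=r_{\mathrm{Mat}_n(R)}(\varphi)$; an equivalence preserves ``essential in a direct summand'', so $R_R^{(n)}$ is CS-Rickart precisely when the right annihilator in $\mathrm{Mat}_n(R)$ of every element is essential in a direct summand of $\mathrm{Mat}_n(R)$, i.e.\ $\mathrm{Mat}_n(R)$ is right ACS.

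For $(2)\Rightarrow(5)$: given $\varphi\in\mathrm{Hom}_R(R_R^{(n)},R_R)$, compose it with the inclusion of $R_R$ as the first coordinate of $R_R^{(n)}$; the resulting endomorphism of $R_R^{(n)}$ has kernel $\mathrm{Ker}\,\varphi$, which is essential in a direct summand by (2). For $(5)\Rightarrow(4)$: an $n$-generated right ideal $T=a_1R+\cdots+a_nR$ is the image of the map $\varphi\colon R_R^{(n)}\to R_R$, $(r_i)_i\mapsto\sum_i a_ir_i$; by (5) $\mathrm{Ker}\,\varphi$ is essential in a direct summand of $R_R^{(n)}$, so Lemma \ref{8} with $M=R$ (where ``$M$-singular'' is just ``singular'') yields $T=\varphi(R_R^{(n)})=P\oplus S$ with $P$ projective and $S$ singular. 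For $(6)\Rightarrow(2)$: for $\varphi\in\mathrm{End}_R(R_R^{(n)})$ the image $\mathrm{Im}\,\varphi$ is $n$-generated, so by (6) it equals $P\oplus S$ with $P=P_1\oplus\cdots\oplus P_n$ projective and $S$ singular, and Lemma \ref{8} then gives $\mathrm{Ker}\,\varphi$ essential in a direct summand, so $R_R^{(n)}$ is CS-Rickart.

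The main implication is $(4)\Rightarrow(6)$, which I would prove by induction on $m$ ($1\le m\le n$): every $n$-generated submodule of $R_R^{(m)}$ is a direct sum of at most $m$ projective modules, each isomorphic to a submodule of $R_R$, plus a singular module; the case $m=n$ is (6). Note first that (4) holds for every $k\le n$ (a $k$-generated ideal is $n$-generated), and that direct summands and epimorphic images of $n$-generated modules are $n$-generated. The case $m=1$ is (4) together with the remark that a direct summand of an ideal of $R_R$ is isomorphic to a submodule of $R_R$. For the inductive step, let $N\subseteq R_R^{(m)}$ be $n$-generated, and let $\pi$, $\pi'$ be the projections of $R_R^{(m)}$ onto the last coordinate and onto the first $m-1$ coordinates. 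By (4), $\pi(N)=P\oplus S$ with $P$ projective, $S$ singular; the composite $N\to\pi(N)\to P$ is onto the projective module $P$, hence splits, so $N=P'\oplus K$ with $P'\cong P$ and $K=\{x\in N:\pi(x)\in S\}$, whence $\pi(K)=S$. Now $K$ is $n$-generated (a summand of $N$), so $\pi'(K)\subseteq R_R^{(m-1)}$ is $n$-generated, and the induction hypothesis gives $\pi'(K)=Q\oplus S'$ with $Q$ a finite direct sum of projectives isomorphic to submodules of $R_R$ and $S'$ singular. Splitting the surjection $K\to\pi'(K)\to Q$ yields $K=Q'\oplus L$ with $Q'\cong Q$ and $L=\{x\in K:\pi'(x)\in S'\}$. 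Since the singular modules $S$ and $S'$ are submodules of $R_R$ and $R_R^{(m-1)}$, they lie in $Z_r(R)$ and $Z(R_R^{(m-1)})=Z_r(R)^{(m-1)}$ respectively; hence every $x\in L$ has $\pi(x)\in Z_r(R)$ and $\pi'(x)\in Z_r(R)^{(m-1)}$, so $L\subseteq Z_r(R)^{(m)}=Z(R_R^{(m)})$, which is a singular module, and therefore $L$ is singular. Thus $N=P'\oplus Q'\oplus L$ has the required form.

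I expect the bookkeeping in $(4)\Rightarrow(6)$ to be the only real obstacle: one has to ensure that each module fed into (4) or into the induction hypothesis is $n$-generated (using stability of ``$n$-generated'' under direct summands and epimorphic images), and---crucially---observe that the leftover module $L$ is \emph{automatically} singular because it is trapped between the two singular ideals produced at the last coordinate and at the first $m-1$ coordinates, hence lies inside $Z(R_R^{(m)})=Z_r(R)^{(m)}$. This last point rests on $Z(-)$ commuting with finite direct sums, on $Z(N)$ being always singular, and on a singular direct summand of a right ideal being contained in $Z_r(R)$. All the remaining links are short applications of Lemma \ref{1}, Lemma \ref{8} and Morita theory.
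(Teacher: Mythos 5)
Your proposal is correct, and for the two substantive links it takes a genuinely different route from the paper. For $(2)\Leftrightarrow(3)$ the paper computes directly with annihilators: it shows by hand that $\mathrm{Ker}\,\phi\unlhd eR_R^{(n)}$ forces $r_S(\phi)\unlhd eS$ and conversely, using cyclic submodules $mR$ and suitably chosen $g,h\in S=\mathrm{End}_R(R_R^{(n)})$; your Morita-theoretic argument (the lattice isomorphism $N\mapsto \mathrm{Hom}_R(R_R^{(n)},N)$ sending $\mathrm{Ker}\,\varphi$ to $r_{\mathrm{Mat}_n(R)}(\varphi)$ and preserving essentiality and direct summands) is the conceptual version of the same computation and is perfectly valid. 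The real divergence is in how you reach $(6)$: the paper proves $(5)\Rightarrow(6)$ by writing $N=f(R_R^{(n)})$ and iterating the relative CS-Rickart property over the coordinate projections $\pi_1f,\ \pi_2f|_{e_1R_R^{(n)}},\dots$, producing a descending chain of direct summands $e_nR_R^{(n)}\subset\cdots\subset e_1R_R^{(n)}$ with complements $P_1,\dots,P_n$ and $\mathrm{Ker}\,f\unlhd e_nR_R^{(n)}$, whence $N\cong P_1\oplus\cdots\oplus P_n\oplus(e_nR_R^{(n)}/\mathrm{Ker}\,f)$ with the last piece singular by Lemma \ref{7}. You instead prove $(4)\Rightarrow(6)$ by induction on the number of coordinates of the ambient free module, splitting off a projective direct summand of the image at each coordinate and observing that the leftover module $L$ is trapped in $Z(R_R^{(m)})$, hence singular. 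Both arguments are sound; the paper's works entirely with kernels inside the domain and needs only the relative CS-Rickart hypothesis $(5)$, while yours works with images and needs the a priori weaker-looking ideal-theoretic statement $(4)$ plus the (easy but essential) facts that ``$n$-generated'' passes to summands and quotients and that $Z$ respects finite direct sums. Your cycle $(2)\Rightarrow(5)\Rightarrow(4)\Rightarrow(6)\Rightarrow(2)$ is logically complete and, if anything, makes the role of condition $(4)$ more transparent than the paper's arrangement, where $(4)\Leftrightarrow(5)$ is dispatched by Lemma \ref{8} and $(6)$ is derived from $(5)$ directly.
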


\begin{proof}
The implications $(1) \Rightarrow (2)$ and $(6) \Rightarrow (2) $  is obvious. The implication
$(2) \Rightarrow (1)$ is deduced from Lemma \ref{1}. The implication
$(2) \Rightarrow (4)$ and the equivalent
$(4) \Leftrightarrow (5)$ are concluded from Lemma \ref{8}. Next we put
$S = \mathrm{End}_{R} (R_{R}^{(n)})$.

$(2)\Rightarrow (3)$. Let $ \phi \in S$. Then $\mathrm {Ker} \phi \unlhd
eR_{R}^{(n)}$ for some idempotent $e \in S$. We will prove that
$r_{S} (\phi) \unlhd eS$. The inclusion $r_{S}(\phi) \subset eS$
is verified directly. Let $g \in eS $ be a nonzero
homomorphism. Since $gR_{R}^{(n)} \cap \mathrm{Ker} \phi \neq 0$, there
is a homomorphism $h \in S$ such that $\phi gh = 0$ and $gh \neq 0$. Hence
$r_{S} (\phi) \unlhd eS$.

$(3) \Rightarrow (2)$. Let $\phi \in S$. Then $r_{S}(\phi)
\unlhd eS$ for some idempotent $ e \in S $. We will show that
$\mathrm{Ker}f \unlhd eR_{R}^{(n)}$. If $m \in \mathrm{Ker}f$ then
for some homomorphism $g \in S$, we have $gR_{R}^{(n)} = mR$ and $\phi
g = 0$. Then $eg = g$ and hence $m \in eR_{R}^{(n)}$. Let $m$
be a nonzero element of $eR_{R}^{(n)}$, then there is a nonzero
homomorphism $f\in S$ such that $fR_{R}^{(n)} \subset mR$. Since
$r_{S}(\phi)\unlhd eS$, we have $\phi fg = 0$ and $fg \neq 0$ for some homomorphism
$g \in S$. Thus $mR \cap
\mathrm{Ker} \phi \neq 0$.

$(5) \Rightarrow (6)$. Let $N$ be an $n$-generated submodule of $R_{R}^{(n)}$. Then there exist homomorphism $f \in S$ such that $f(M)=N.$ Let $\pi_{i}$ be the projection from
$R_{R}^{(n)}$ to its $i$-th component and
$\varepsilon_{i}: R_{R} \rightarrow R_{R}^{(n)}$ be the embedding for every $1 \leq i \leq n $. Since the module $R_{R} ^ {(n)}$
is relatively CS-Rickart to $R_{R}$, there exists an idempotent $e_{1} \in S$ such that $\mathrm{Ker} \pi_{1} f \unlhd e_{1} R_{R}^{(n)}$. This implies that
$\mathrm {Ker} f \subset \mathrm {Ker} \pi_{1} f$. Now consider the homomorphism
$\pi_{2} f_{| e_{1} R_{R}^{(n)}}$. For some idempotent
$e_{ 2 } \in S$ we have the inclusions $\mathrm{Ker}f \subset
\mathrm{Ker} \pi_{2} f_{| e_{ 1 } R_{R}^{(n)}} \unlhd e_{2} R_{R}^{(n)}$ and
$e_{2} R_{R}^{(n)} \subset e_{1} R_{R}^{(n)}$.
Furthermore, we obtain a family of idempotents $e_{1}, \ldots, e_{n} \in S$ such that
$R_{R}^{(n)} =e_{1}R_{R}^{(n)}\oplus P_{1}, e_{1}R_{R}^{(n)}=e_{2}R_{R}^{(n)}\oplus P_{2},\ldots, e_{n-1}R_{R}^{(n)}=e_{n}R_{R}^{(n)}\oplus P_{n},$ where every $P_{1}, \ldots,  P_{n}$ is isomorphic to a submodule of $R_{R},$ and

$\mathrm{Ker}f\subset\mathrm{Ker}\pi_{1}f\unlhd e_{1} R_{R}^{(n)}$,

$\mathrm{Ker}f \subset\mathrm {Ker}\pi_{2}f_{| e_{1} R_{R}^{(n)}}\unlhd
e_{2} R_{R}^{(n)}$,

$\cdots$

$\mathrm{Ker}f\subset\mathrm{Ker}\pi_{n}f_{| e_{n-1} R_{R}^{(n)}}\unlhd
e_{n}R_{R}^{(n)}$,

$e_{n}R_{R}^{(n)}\subset\ldots\subset e_{2} R_{R}^{(n)}\subset
e_{1} R_{R}^{(n)}$.

Then we have $\mathrm{Ker}f\subset\mathrm {Ker}\pi_{1} f \cap
\mathrm{Ker} \pi_{2}f_{| e_{1} R_{R}^{(n)}} \cap \ldots \cap
\mathrm{Ker} \pi_{n} f_{| e_{n-1} R_{R}^{(n)}} \unlhd e_{n} R_{R}^{(n)}$.
If $p \in \mathrm{Ker}\pi_{1} f \cap \mathrm{Ker}\pi_{2}f_{| e_{1} R_{R}^{(n)}} \cap
\ldots \cap \mathrm{Ker} \pi_{n} f_{| e_{n-1} R_{R}^{(n)}}$, then $f(p)=\sum_{1\leq
i \leq n} \varepsilon_{i} \pi_{i}f(p) = 0$. Thus $\mathrm{Ker}f =
\mathrm{Ker} \pi_{1} f\cap \mathrm{Ker} \pi_{2} f_{| e_{1} R_{R}^{(n)}}\cap \ldots \cap
\mathrm{Ker} \pi_{n} f_{| e_{n- 1 } R_{R}^{(n)}}$ and therefore
$\mathrm{Ker}f \unlhd e_{n} R_{R}^{(n)}$. Since $R_{R}^{(n)} =P_{1}\oplus \ldots \oplus P_{n} \oplus e_{n} R_{R}^{(n)}$, implies $N=f(R_{R}^{(n)})\cong P_{1}\oplus \ldots \oplus P_{n} \oplus (e_{n}R_{R}^{(n)} /ker(f))$.
\end {proof}

Now we describe the right weakly semihereditary rings.
\begin{theorem}\label{14}
The following conditions are equivalent for a ring $R$:
\begin{itemize}
 \item[(1)] Every finitely generated projective right $R$-module
is CS-Rickart.
 \item[(2)] The free $R$-module $R_{R}^{(n)}$ is a CS-Rickart
module for every $n \in \mathbb {N}$.
 \item[(3)] $\mathrm {Mat}_n (R)$ is a right ACS ring for
every $n \in \mathbb{N}$.
    \item[(4)] For some positive integer $m$, every finitely generated right
ideal of the ring $\mathrm {Mat}_m (R)$ has the form
$P \oplus S$, where $P$ is a projective $\mathrm {Mat}_m (R)$-module and $S$ is a singular
right ideal of $\mathrm {Mat}_m (R)$.
 \item[(5)] $R$ is a right weakly semihereditary ring.
\item[(6)] Every finitely generated submodule of projective right $R$-module has the form
$P_{1}\oplus \ldots\oplus  P_{n}\oplus S$, where every $P_{1}, \ldots,  P_{n}$ is a projective module which is isomorphic to a submodule of $R_{R}$ and $S$ is a singular module.
\end {itemize}
\end {theorem}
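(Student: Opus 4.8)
The plan is to funnel everything through Theorem \ref{13}, which already proves the equivalence of the rank-$n$ versions of these six conditions for each \emph{fixed} $n\in\mathbb N$, and then to quantify over $n$. I would first treat (1), (2), (3), (5), (6) using (2) as the hub. For (1)$\Leftrightarrow$(2): every finitely generated projective module is a direct summand of some $R_R^{(n)}$, so (2)$\Rightarrow$(1) is Lemma \ref{1} and (1)$\Rightarrow$(2) is immediate. For (2)$\Leftrightarrow$(3), apply the equivalence of (2) and (3) in Theorem \ref{13} at each $n$. For (2)$\Leftrightarrow$(5): a finitely generated right ideal is $n$-generated for some $n$, so (5) says exactly that Theorem \ref{13}(4) holds at every $n$, which by Theorem \ref{13} is the same as (2) holding at every $n$. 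For (2)$\Leftrightarrow$(6): a finitely generated submodule $N$ of a projective module $Q$ embeds, after embedding $Q$ into a free module, into the free submodule spanned by the finitely many coordinates appearing in a generating set of $N$; enlarging this rank so that $N$ is generated by that many elements makes $N$ an $n$-generated submodule of $R_R^{(n)}$, and (2)$\Leftrightarrow$(6) of Theorem \ref{13} applies; conversely, specializing (6) to $Q=R_R^{(n)}$ shows that the image of every endomorphism of $R_R^{(n)}$ is of the form $P_0\oplus S$ with $P_0$ projective and $S$ singular, whence $R_R^{(n)}$ is CS-Rickart by Lemma \ref{8}.

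The genuinely new ingredient is (4), with its existential ``for some $m$''. The direction (5)$\Rightarrow$(4) is trivial: take $m=1$, so that $\mathrm{Mat}_1(R)=R$ and (5) is precisely the assertion of (4). For (4)$\Rightarrow$(2), fix such an $m$ and put $T=\mathrm{Mat}_m(R)$. Every $n$-generated right ideal of $T$, being finitely generated, has the form $P\oplus S$, so Theorem \ref{13}(4) holds for the ring $T$ at every $n$; by the equivalence of (3) and (4) in Theorem \ref{13} applied to $T$, the ring $\mathrm{Mat}_n(T)$ is right ACS for every $n$. Since $\mathrm{Mat}_n(T)=\mathrm{Mat}_n(\mathrm{Mat}_m(R))\cong\mathrm{Mat}_{nm}(R)$, this says $\mathrm{Mat}_{nm}(R)$ is right ACS for every $n$; now applying the equivalence of (2) and (3) in Theorem \ref{13}, this time to $R$, we get that $R_R^{(nm)}$ is CS-Rickart for every $n$. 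Finally, for an arbitrary $k\in\mathbb N$ choose $n$ with $nm\ge k$: then $R_R^{(k)}$ is a direct summand of $R_R^{(nm)}$ and hence CS-Rickart by Lemma \ref{1}. This gives (2), closing the chain.

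The main obstacle is precisely this last argument (4)$\Rightarrow$(2): one must pass back from the matrix ring $\mathrm{Mat}_m(R)$ to $R$ and then descend from matrix rings whose rank is a multiple of $m$ to matrix rings of all ranks. The descent is exactly what Lemma \ref{1} provides, and the transfer between $\mathrm{Mat}_m(R)$-modules and $R$-modules is absorbed into Theorem \ref{13} through its equivalence of (2) and (3) applied to both rings, so no extra Morita-theoretic machinery is needed; conceptually it is just the Morita invariance of the CS-Rickart property together with $T_T\cong (R^m_T)^{(m)}$. Everything else is bookkeeping --- rewriting ``finitely generated'' as ``$n$-generated for some $n$'' and citing Theorem \ref{13} level by level.
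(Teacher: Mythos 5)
Your proof is correct and follows essentially the same route as the paper: the equivalence of (1), (2), (3), (5), (6) is delegated to Theorem \ref{13}, and (4)$\Rightarrow$(2) is handled exactly as in the paper by passing to $\mathrm{Mat}_{nm}(R)\cong\mathrm{Mat}_{n}(\mathrm{Mat}_{m}(R))$, invoking the ACS/CS-Rickart equivalences of Theorem \ref{13}, and descending to smaller ranks with Lemma \ref{1}. Your observation that (5)$\Rightarrow$(4) is immediate by taking $m=1$ is a small simplification over the paper, which instead establishes the condition of (4) for every $m$ by another pass through matrix rings and Theorem \ref{13}.
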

\begin {proof}
The equivalent of (1), (2), (3), (5), (6) is deduced from Theorem \ref{13}.

$(4) \Rightarrow (2)$. Let $R_{R}^{(n_{0})}$ be a finitely
free right $R$-module. Choose a natural number
$k$ such that $km > n_{0}$, Theorem \ref{13} implies that the ring
$M_{k} (M_{m}(R)) \cong M_{km}(R)$ is a right ACS ring.
Theorem \ref{13} and Lemma \ref{1} then infer that the module $R_{R}^{(n_{ 0})}$
is a CS-Rickart module.

$(5) \Rightarrow (4)$ Theorem \ref{13} implies that for every
natural number $k$, the ring $M_{km} (R) \cong M_{k} (M_{m} (R))$
is a right ACS ring. The implication is  drawn from
Theorem \ref{13}.
\end {proof}

We call a ring $R$ is a \textit{right weakly hereditary ring} if every right ideal of $R$ is of the form $P\oplus S$, where $P_R$ is a projective module and $S_R$ is a singular module. From [\ref{BY}, Theorem 3.2.10], right co-H-rings are right weakly hereditary rings. In particular, weakly hereditary rings are Artinian serial rings and QF-rings.
For a right Artinian ring, we have the following corollary.

\begin{corollary} Let $R$ be a right Artinian ring. Then the following conditions are equivalent:
\begin{itemize}
    \item[(1)] $\mathrm {Mat}_n (R)$ is a right essentially Baer ring for every $n \in \mathbb{N}$.
    \item[(2)] $R$ is a right weakly hereditary ring.
\end{itemize}
\end{corollary}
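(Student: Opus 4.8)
The plan is to reduce everything to the already-established equivalences and to the dictionary between the ACS condition and the essentially Baer condition provided by Corollary \ref{4b}. The key observation is that a right Artinian ring — and hence every matrix ring $\mathrm{Mat}_n(R)$ over it, since $\mathrm{Mat}_n(R)$ is again right Artinian — satisfies the minimum condition on right annihilators trivially (there are no infinite descending chains of right ideals at all). Therefore Corollary \ref{4b} applies to each $\mathrm{Mat}_n(R)$ and tells us that $\mathrm{Mat}_n(R)$ is a right ACS ring if and only if $\mathrm{Mat}_n(R)$ is a right essentially Baer ring. Granting this, condition (1) of the corollary (that $\mathrm{Mat}_n(R)$ is right essentially Baer for every $n$) is equivalent to the statement that $\mathrm{Mat}_n(R)$ is a right ACS ring for every $n$, which is condition (3) of Theorem \ref{14}.

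So the first step is to record that $\mathrm{Mat}_n(R)$ is right Artinian whenever $R$ is, and that any right Artinian ring has the minimum condition on right annihilators; then invoke Corollary \ref{4b} at each $n$ to rewrite condition (1) as ``$\mathrm{Mat}_n(R)$ is a right ACS ring for every $n \in \mathbb{N}$''. The second step is to apply Theorem \ref{14}, specifically the equivalence $(3)\Leftrightarrow(5)$, to conclude that this is in turn equivalent to $R$ being a right weakly semihereditary ring. The final step is to check that, for a right Artinian ring, being right weakly semihereditary coincides with being right weakly hereditary: over a right Artinian (in particular right Noetherian) ring every right ideal is finitely generated, so the defining condition of a weakly semihereditary ring — every finitely generated right ideal has the form $P\oplus S$ with $P$ projective and $S$ singular — is literally the defining condition of a weakly hereditary ring. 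This identifies condition (2).

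The only point that requires a moment's care is the claim that $\mathrm{Mat}_n(R)$ inherits right Artinianness from $R$; this is standard (for instance $\mathrm{Mat}_n(R) \cong \mathrm{End}_R(R_R^{(n)})$ and $R_R^{(n)}$ is a right $R$-module of finite length, whose lattice of $\mathrm{Mat}_n(R)$-submodules embeds into its lattice of $R$-submodules), and once it is in hand the argument is a three-line chain of equivalences with no genuine obstacle.

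\begin{proof}
Since $R$ is right Artinian, the module $R_R^{(n)}$ has finite length, hence $\mathrm{Mat}_n(R)\cong\mathrm{End}_R(R_R^{(n)})$ is right Artinian for every $n\in\mathbb{N}$; in particular $\mathrm{Mat}_n(R)$ has the minimum condition on right annihilators. By Corollary \ref{4b}, $\mathrm{Mat}_n(R)$ is a right essentially Baer ring if and only if it is a right ACS ring. Thus condition (1) is equivalent to: $\mathrm{Mat}_n(R)$ is a right ACS ring for every $n\in\mathbb{N}$, which is condition (3) of Theorem \ref{14}. By the equivalence $(3)\Leftrightarrow(5)$ of Theorem \ref{14}, this holds if and only if $R$ is a right weakly semihereditary ring. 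Finally, since $R$ is right Noetherian, every right ideal of $R$ is finitely generated, so $R$ is right weakly semihereditary precisely when every right ideal of $R$ is of the form $P\oplus S$ with $P_R$ projective and $S_R$ singular, that is, precisely when $R$ is a right weakly hereditary ring. This proves $(1)\Leftrightarrow(2)$.
\end{proof}
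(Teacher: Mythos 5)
Your proposal is correct and follows exactly the route the paper intends: the corollary is stated as an immediate consequence of Theorem \ref{14} together with Corollary \ref{4b}, and your chain (Artinian $\Rightarrow$ minimum condition on right annihilators for each $\mathrm{Mat}_n(R)$, hence essentially Baer $\Leftrightarrow$ ACS; then $(3)\Leftrightarrow(5)$ of Theorem \ref{14}; then Noetherianness collapses weakly semihereditary to weakly hereditary) supplies precisely the steps the paper leaves implicit. No gaps.
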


The next corollary is a result of the previous theorem, Lemma \ref{3} and
the fact that $R$ is a right nonsingular ring
if and only if $\mathrm{Mat}_n(R)$ is a right nonsingular ring for all positive integer $n$.

\begin{corollary} [\text{[}\ref{GL3}, Theorem 3.6\text{]}] The following conditions are equivalent for a ring $R$:
\begin{itemize}
    \item[(1)] Every finitely generated projective right $R$-module is a Rikart module.
    \item[(2)] The free $R$-module $R_R^{(n)}$ is a Rickart module for every $n\in \mathbb{N}$.
    \item[(3)] $\mathrm{Mat}_n(R)$ is a right p.p. ring for every $n\in \mathbb{N}$.
    \item[(4)] $R$ is a right semihereditary ring.
    \item[(5)] The ring $\mathrm{Mat}_n(R)$ is a right semihereditary  ring for some natural number $n$.
\end{itemize}
\end{corollary}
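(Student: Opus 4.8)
The statement asserts that for a ring $R$, every finitely generated projective right $R$-module is a Rickart module if and only if $R$ is right semihereditary, with the usual equivalent reformulations via matrix rings and free modules. The plan is to deduce this from Theorem~\ref{14} together with Lemma~\ref{3} and the transfer of nonsingularity to matrix rings. Concretely, the key observation is that the Rickart property is exactly the CS-Rickart property \emph{plus} $\mathcal{K}$-nonsingularity (Lemma~\ref{3}), and right semiheredity is exactly right weak semiheredity \emph{plus} nonsingularity (since over a nonsingular ring a finitely generated right ideal of the form $P\oplus S$ with $S$ singular forces $S=0$, hence the ideal is projective; conversely semihereditary rings are nonsingular). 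So the whole corollary is obtained by ``intersecting'' Theorem~\ref{14} with the nonsingularity hypothesis on each side.

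First I would record that $R$ is right nonsingular if and only if $\mathrm{Mat}_n(R)$ is right nonsingular for every $n$ (this is classical and is quoted in the paragraph preceding the corollary), and more precisely that for a nonsingular ring a module is CS-Rickart if and only if it is Rickart, because a nonsingular module is automatically $\mathcal{K}$-nonsingular (an endomorphism killing an essential submodule has image a nonsingular homomorphic image of an essential-kernel map, hence singular, hence zero). Using Lemma~\ref{1}, it suffices as in Theorem~\ref{14} to argue in terms of the free modules $R_R^{(n)}$ and the rings $\mathrm{Mat}_n(R)\cong \mathrm{End}_R(R_R^{(n)})$. Then (2)$\Leftrightarrow$(3) is the module-theoretic restatement of ``principal right ideals of $\mathrm{Mat}_n(R)$ are projective'' once one knows $R_R^{(n)}$ is nonsingular (so ``Rickart module'' and ``p.p.\ ring for the endomorphism ring'' match up exactly as in the classical dictionary), and (1)$\Leftrightarrow$(2) is Lemma~\ref{1} applied to direct summands of free modules.

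For the substantive equivalences (2)$\Leftrightarrow$(4), I would run Theorem~\ref{14}: every f.g.\ projective right $R$-module is CS-Rickart $\iff$ $R$ is right weakly semihereditary $\iff$ every f.g.\ right ideal has the form $P\oplus S$ with $P$ projective and $S$ singular. Now impose that we are in the Rickart (not merely CS-Rickart) setting. In one direction, if every f.g.\ projective right $R$-module is Rickart, then in particular $R_R$ is Rickart, hence $R$ is a right p.p.\ ring, hence right nonsingular; by Theorem~\ref{14} every f.g.\ right ideal is $P\oplus S$ with $S$ singular, but $S\subseteq Z(R)=0$, so every f.g.\ right ideal is projective, i.e.\ $R$ is right semihereditary. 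Conversely, if $R$ is right semihereditary then it is right nonsingular and right weakly semihereditary, so by Theorem~\ref{14} every f.g.\ projective right $R$-module is CS-Rickart, and by Lemma~\ref{3} (using nonsingularity, inherited by summands of free modules) it is in fact Rickart. Finally (4)$\Leftrightarrow$(5): $\mathrm{Mat}_n(R)$ semihereditary for one $n$ is handled exactly as the ``$(4)\Rightarrow(2)$'' and ``$(5)\Rightarrow(4)$'' steps in Theorem~\ref{14}, padding $n$ up to a multiple via Lemma~\ref{1} and using that $\mathrm{Mat}_{km}(R)\cong\mathrm{Mat}_k(\mathrm{Mat}_m(R))$, combined with the fact that semiheredity of $\mathrm{Mat}_m(R)$ transfers to all $\mathrm{Mat}_n(R)$ by the equivalence with semiheredity of $R$.

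The main obstacle is making the ``CS-Rickart $+$ nonsingular $=$ Rickart'' reduction completely airtight: one must check that nonsingularity of $R_R$ really does pass to $R_R^{(n)}$ and to its direct summands (clear, since submodules and finite direct sums of nonsingular modules are nonsingular), and that a nonsingular module is $\mathcal{K}$-nonsingular so that Lemma~\ref{3} applies verbatim; once that is in place, every implication is a direct quotation of Theorem~\ref{14}, Lemma~\ref{1}, Lemma~\ref{3}, and the matrix-nonsingularity fact, with no new computation. A minor bookkeeping point is that the singular right ideal $S$ appearing in Theorem~\ref{14}(4) collapses to $0$ precisely because $Z(R_R)=0$; this is the single place where nonsingularity does genuine work, and it is what upgrades ``$P\oplus S$'' to ``$P$'' and hence ``weakly semihereditary'' to ``semihereditary''.
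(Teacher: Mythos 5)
Your proposal is correct and follows essentially the same route as the paper, which derives the corollary from Theorem \ref{14}, Lemma \ref{3}, and the transfer of right nonsingularity to matrix rings; you have simply filled in the details (nonsingular implies $\mathcal{K}$-nonsingular, and the singular summand $S$ collapses to $0$) that the paper leaves implicit.
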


\label{lastpage}

\end{document}